\newcommand{\bib}[4]{\bibitem{#1}{\sc#2: }{\it#3. }{#4.}}
\DeclareMathOperator{\argmin}{argmin}
\begin{document}

\def\ds{\displaystyle}
\def\eps{{\varepsilon}}
\def\N{\mathbb{N}}
\def\R{\mathbb{R}}
\def\A{\mathcal{A}}
\def\EE{\mathcal{E}}
\def\F{\mathcal{F}}
\def\HH{\mathcal{H}}
\def\LL{\mathcal{L}}
\def\M{\mathcal{M}}
\def\PP{\mathcal{P}}
\def\RR{\mathcal{R}}
\def\KK{\mathcal{K}}
\newcommand{\be}{\begin{equation}}
\newcommand{\ee}{\end{equation}}
\newcommand{\cp}{\mathop{\rm cap}\nolimits}

\def\symd{{\scriptstyle\Delta}}
\newcommand{\dss}{\displaystyle}
\newcommand{\g}{\gamma}
\newcommand{\Om}{\Omega}
\newcommand{\om}{\omega}
\newcommand{\G}{\Gamma}
\newcommand{\Sg}{\Sigma}
\newcommand{\lb}{\lambda}
\newcommand{\vps}{\varepsilon}
\newcommand{\ep}{\varepsilon}
\newcommand{\vphi}{\varphi}
\newcommand{\wg}{w \gamma}
\newcommand{\wgp}{w \gamma_p}
\newcommand{\ra}{\rightarrow}
\newcommand{\rau}{\rightharpoonup}
\newcommand{\lra}{\longrightarrow}
\newcommand{\Lra}{\Longrightarrow}
\newcommand{\hr}{\hookrightarrow}
\newcommand{\sr}{\stackrel}
\newcommand{\nN}{{n \in \NN}}
\newcommand{\nif}{{n \rightarrow \infty}}
\newcommand{\kif}{{k \rightarrow \infty}}
\newcommand{\sq}{\subseteq}
\newcommand{\sm}{\setminus}
\newcommand{\ov}{\overline}

\newcommand{\MM}{\text{\it MM} }
\newcommand{\GMM}{\text{\it GMM} }
\newcommand{\ls}{|\partial F|}
\newcommand{\forae}{\text{for a.e. }}
\newcommand{\UUU}{\color{red}}
\newcommand{\EEE}{\color{black}}
\newcommand{\haz}{\widehat}
\newcommand{\epsi}{\varepsilon}

\numberwithin{equation}{section}
\theoremstyle{plain}
\newtheorem{theo}{Theorem}[section]
\newtheorem{lemm}[theo]{Lemma}
\newtheorem{coro}[theo]{Corollary}
\newtheorem{prop}[theo]{Proposition}
\newtheorem{defi}[theo]{Definition}
\theoremstyle{remark}
\newtheorem{rema}[theo]{Remark}
\newtheorem{exam}[theo]{Example}

\begin{center}
{\Large Shape flows for spectral optimization problems}
\end{center}

\begin{center}
{\large D.~Bucur, G.~Buttazzo, U.~Stefanelli}
\end{center}

\bigskip

\begin{abstract}
We consider a general formulation of gradient flow evolution for problems whose natural framework is the one of metric spaces. The applications we deal with are concerned with the evolution of {\it capacitary measures} with respect to the $\gamma$-convergence dissipation distance and with the evolution of domains in spectral optimization problems.
\end{abstract}

\bigskip\noindent
{\bf AMS Subject Classification (2010):} 49Q10, 49Q20, 58J30, 53C44.

\bigskip\noindent
{\bf Keywords:} shape optimization, spectral optimization, minimizing movements, curves of maximal slope, gradient flows, $\gamma$-convergence.
\bigskip

\section{Introduction}\label{sintr}

Shape optimization problems received a particular attention from the mathematical community in the last years, both for the several applications that require the design of efficient shapes (for instance in Structural Mechanics and Aerospace Engineering) and for the difficult mathematical problems that have to be solved in order to obtain the existence of optimal solutions. In a very general form, shape optimization problems can be written as minimum problems like
\be\label{shopt}
\min\big\{F(\Omega)\ :\ \Omega\in\A\big\}
\ee
where $\A$ is a suitable family of admissible domains and $F$ is a suitable cost function defined on $\A$. Problems of this kind arise in many fields, and we quote the recent books \cite{all02,besi03,bubu05,hen06,hepi05,pir84,sozo92}, where the reader can find all the necessary details and references.

It is well known that the existence of optimal shapes only occurs in very particular situations, where either some quite severe geometrical constraints are imposed to the admissible domains of the class $\A$ (like for instance {\it convexity}), or where the cost functional satisfies suitable monotonicity conditions (as it happens in several {\it spectral optimization problems}). When the existence of optimal shape fails, one has to deal with {\it relaxed} solutions, that belong to a space much larger than the one of classical domains, and describe efficiently the behaviour of minimizing sequences for problem \eqref{shopt}.

In this paper we are interested in problems of the form \eqref{shopt} arising in spectral optimization: the admissible class $\A$ is made of domains of $\R^d$ and the cost functional $F$ is of one of the following types.

\medskip{\it Integral functionals.} Given a right-hand side $f$ we consider the PDE
$$-\Delta u=f\hbox{ in }\Omega,\qquad u\in H^1_0(\Omega)$$
which provides, for every admissible domain $\Omega\subset\R^d$, a unique solution $u_\Omega$ that we assume extended by zero outside of $\Omega$. The cost $F(\Omega)=J(u_\Omega)$ is obtained by taking
$$J(u)=\int_{\R^d}j\big(x,u(x)\big)\,dx$$
for a suitable integrand $j$.

\medskip{\it Spectral functionals.} For every admissible domain $\Omega$ we consider the Dirichlet Laplacian $-\Delta$ which, under mild conditions on $\Omega$, admits a compact resolvent and so a discrete spectrum $\lambda(\Omega)$. The cost is in this case of the form
$$F(\Omega)=\Phi\big(\lambda(\Omega)\big)$$
for a suitable function $\Phi$. For instance, by taking $\Phi(\lambda)=\lambda_k$ we may consider the optimization problem for the $k$-th eigenvalue of $-\Delta$:
$$\min\big\{\lambda_k(\Omega)\ :\ \Omega\in\A\big\}.$$

We will summarize some known facts about the minimization problems above and we deal with the problem of studying the shape evolution $\Omega(t)$, starting from a given domain $\Omega_0$ according to a suitable definition of {\it gradient flow}. 
The theory of gradient flows in metric spaces has been recently developed in a great generality (see \cite{ags05}) and in some situations it can be easily adapted to our purposes, in particular when we deal with relaxed problems. The extra compactness of the latter problems is of great help for proving the existence of a relaxed flow. On the counterpart, the flow is made of relaxed domains (capacitary measures in our case) and not of classical domains; we will show some examples in which, even starting from a very smooth initial domain $\Omega_0$, the gradient flow quits the original admissible class $\A$ to evolve in the class of relaxed shapes (see \cite{bbl08,grki11}).

However, for some particular cases of cost functionals $F$, a different gradient flow can be considered; this will be made in Section \ref{sspec} where we show that a careful use of monotonicity properties of $F$ allows to obtain an evolution path $\Omega(t)$ made of classical domains. Some properties of the path $\Omega(t)$ are studied, and some interesting open problems are pointed out.

Let us close this introduction by mentioning that the idea of considering shape flows is somewhat reminiscent of many classical numerical treatments of \eqref{shopt} where an initial (tentative) shape $\Omega_0$ is iteratively improved towards minimization. A numerical gradient flow perspective has in particular already been considered in \cite{No07,No10}  in connection with some applications to image segmentation, optimal shape design, and surface diffusion.

\section{Preliminary tools}\label{stool}

\subsection{Capacity and quasi-open sets}\label{sscapac}

In the following we use the well-known notion of {\it capacity} for a subset $E$ of $\R^d$:
$$\cp(E)=\inf\Big\{\int_{\R^d}(|\nabla u|^2+u^2)\,dx\ :\ u\in{\cal U}_E\Big\}\,,$$
where ${\cal U}_E$ is the set of all functions $u$ of the Sobolev space $H^1(\R^d)$ such that $u\ge1$ almost everywhere in a neighborhood of $E$. If a property $P(x)$ holds for all $x\in E$ except for the elements of a set $Z\subset E$ with $\cp(Z)=0$, we say that $P(x)$ holds {\it quasi-everywhere} (shortly {\it q.e.}) on $E$, whereas the expression {\it almost everywhere} (shortly {\it a.e.}) refers, as usual, to the Lebesgue measure.

A subset $\Omega$ of $\R^d$ is said to be {\it quasi-open} if for every $\eps>0$ there exists an open subset $\Omega_\eps$ of $\R^d$, such that $\cp(\Omega_\eps\symd\Omega)<\eps$, where $\symd$ denotes the symmetric difference of sets. Actually, in the definition above we can additionally require that $\Omega\subset\Omega_\eps$. Similarly, we define {\it quasi-closed} sets. The class of all quasi-open subsets of a given set $D$ will be denoted by $\A(D)$. In the following we always consider subsets $\Omega$ of a {\it bounded} open set $D\subset\R^d$.

A function $u:\R^d\to\R$ is said to be {\it quasi-continuous} (resp. {\it quasi-lower semicontinuous}) if for every $\eps>0$ there exists a continuous (resp. lower semicontinuous) function $u_\eps:\R^d\to\R$ such that $\cp(\{u\ne u_\eps\})<\eps$. It is well known (see for instance \cite{zie89}) that every function $u\in H^1(\R^d)$ has a quasi-continuous representative $\tilde u$, which is uniquely defined up to a set of capacity zero, and given by
$$\tilde u(x)=\lim_{\eps\to0}\frac{1}{|B(x,\eps)|}\int_{B(x,\eps)}u(y)\,dy\,.$$
In the following we always identify, by an abuse of notation, a Sobolev function $u$ with its quasi-continuous representative $\tilde u$, so that a pointwise condition can be imposed on $u(x)$ for quasi-every $x$. In this way, we have for every subset $E$ of $\R^d$
$$\cp(E)=\min\Big\{\int_{\R^d}(|\nabla u|^2+u^2)\,dx\ :\ u\in H^1(\R^d),\ u\ge1\hbox{ q.e. on }E\Big\}.$$
By the identification above, a set $\Omega\subset\R^d$ is quasi-open if and only if there exists a function $u\in H^1(\R^d)$ such that $\Omega=\{u>0\}$. 

The definition of the Sobolev space $H^1_0(\Omega)$ can be extended for a quasi-open set $\Omega$; it is the space of all functions $u\in H^1(\R^d)$ such that $u=0$ q.e. on $\R^d\setminus\Omega$, with norm
$$\|u\|_{H^1_0(\Omega)}=\|u\|_{H^1(\R^d)}.$$
Most of the well-known properties of Sobolev functions on open sets extend to quasi-open sets. In particular, for every $f\in L^2(D)$ there exists a unique solution of the PDE formally written as
\be\label{edirio}
-\Delta u=f\hbox{ in }\Omega,\qquad u\in H^1_0(\Omega)
\ee
that we consider extended by zero on $D\setminus\Omega$. The precise meaning of the equation above has to be given in the weak form
$$u\in H^1_0(\Omega),\qquad\int_D\nabla u\nabla v\,dx=\int_D fv\,dx\quad\forall v\in H^1_0(\Omega)$$
which turns out to be equivalent to the minimization problem
$$\min\Big\{\int_D\Big(\frac{1}{2}|\nabla v|^2-fv\Big)\,dx\ :\ v\in H^1_0(\Omega)\Big\}.$$
We denote the unique solution $u$ of the problem above by $\RR_\Omega(f)$, which defines in this way the {\it resolvent operator} $\RR_\Omega$.

\subsection{$\gamma$-convergence and $w\gamma$-convergence}\label{ssgamma}

The class $\A(D)$ of all quasi-open subsets of $D$ can be endowed with a convergence structure, called $\gamma$-convergence.

\begin{defi}\label{dgammao}
We say that a sequence of quasi-open sets $(\Omega_n)$ in $\A(D)$ $\gamma$-converges to a quasi-open set $\Omega\in\A(D)$ if for every $f\in L^2(D)$ we have that $\RR_{\Omega_n}(f)$ converge to $\RR_\Omega(f)$ weakly in $H^1_0(D)$.
\end{defi}

The following facts for the $\gamma$-convergence can be shown (see for instance \cite{bubu05}).
\begin{enumerate}
\item In Definition \ref{dgammao} it is equivalent to require the weak $H^1_0(D)$ convergence only for $f=1$. In addition, the $\gamma$-convergence of $\Omega_n$ to $\Omega$ is equivalent to the $\Gamma$-convergence (see \cite{dm93}) of the functionals
$$\int_D|\nabla u|^2\,dx\hbox{ if }u\in H^1_0(\Omega_n),\qquad+\infty\hbox{ otherwise}$$
to the functional
$$\int_D|\nabla u|^2\,dx\hbox{ if }u\in H^1_0(\Omega),\qquad+\infty\hbox{ 
otherwise}$$
with respect to the $L^2(D)$ topology.
\item It can be proven (see \cite{bubu05}) that if $\Omega_n\to\Omega$ in the $\gamma$-convergence, the convergence of the resolvent operators $\RR_{\Omega_n}$ to $\RR_{\Omega}$ is in fact in the $\LL\big(L^2(D)\big)$ operator norm. In particular, the spectrum of $\RR_{\Omega_n}$ converges (componentwise) to the spectrum of $\RR_\Omega$, hence the spectrum of $-\Delta$ on $H^1_0(\Omega_n)$ converges (componentwise) to the spectrum of $-\Delta$ on $H^1_0(\Omega)$.
\item The $\gamma$-convergence is metrizable on $\A(D)$; an equivalent distance to the $\gamma$-convergence is given by
$$d_\gamma(\Omega_1,\Omega_2)=\|\RR_{\Omega_1}(1)-\RR_{\Omega_2}(1)\|_{L^2(D)}.$$
\end{enumerate}

The $\gamma$-convergence is not compact; indeed it is possible to construct a sequence $(\Omega_n)$ of domains such that the corresponding solutions $\RR_{\Omega_n}(1)$ do not converge to a function of the form $\RR_\Omega(1)$. The first example of such a sequence was provided by Cioranescu and Murat in \cite{cimu82} by removing from the set $D$ a periodic array of balls of equal radius $r_n\to0$. If the radius is suitably chosen they proved that the weak $H^1_0(D)$ limit of $\RR_{\Omega_n}(1)$ satisfies the PDE
$$-\Delta u+cu=1\hbox{ in }D,\qquad u\in H^1_0(D)$$
where $c>0$ is a constant, and thus the sequence of domains $(\Omega_n)$ cannot $\gamma$-converge to any domain $\Omega$. This is why, in order to study the behaviour of of minimizing sequences of domains, a {\it relaxation} procedure is needed.

The relaxed form of a Dirichlet problem like \eqref{edirio}, has been obtained by Dal Maso and Mosco in \cite{dmmo87} where it is proven that the compactification of the metric space $\big(\A(D),d_\gamma\big)$ is the set $\M_0(D)$ of all nonnegative regular Borel measures $\mu$ on $D$, possibly $+\infty$ valued, such that
$$\mu(B)=0\hbox{ for every Borel set $B\subset D$ with }\cp(B)=0.$$
Note that the measures $\mu\in\M_0(D)$ are not finite, and may take the value $+\infty$ on large parts of $D$. For instance the measure
$$\infty_{D\setminus\Omega}(E)=\begin{cases}
0&\mbox{if }\cp(E\setminus\Omega)=0,\\
+\infty&\mbox{otherwise}\end{cases}$$
belongs to the class $\M_0(D)$.

Given $\mu\in\M_0(D)$ we consider the space $X_\mu(D)$ of all functions $u\in H^1_0(D)$ such that $\int_D u^2\,d\mu<\infty$, endowed with the Hilbert norm
$$\|u\|_{X_\mu(D)}=\Big(\int_D|\nabla u|^2\,dx+\int_D u^2\,d\mu\Big)^{1/2}.$$
This allows us to consider the relaxed form of a Dirichlet problem, formally written as
$$-\Delta u+\mu u=f\hbox{ in }D,\qquad u\in X_\mu(D)$$
and whose precise meaning is given in the weak form
$$u\in X_\mu(D),\qquad\int_D\nabla u\nabla v\,dx+\int_D uv\,d\mu=\int_Df(x)v\,dx\quad\forall v\in X_\mu(D).$$
By the usual Lax-Milgram method we obtain that, for every $\mu\in\M_0(D)$ and every $f\in L^2(D)$, there exists a unique solution $u=\RR_\mu(f)$ of the equation above, which defines the resolvent operator $\RR_\mu$.

If $\Omega\in\A(D)$ and $\mu=\infty_{D\setminus\Omega}$ then the space $X_\mu(D)$ coincides with the Sobolev space $H^1_0(\Omega)$ and $\RR_\Omega(f)=\RR_\mu(f)$. If $f\ge0$, then by maximum principle the solution $\RR_\mu(f)$ is nonnegative too, and then also $f+\Delta u=\mu u$ is nonnegative. On the other hand, if $f>0$ we can write $\mu=(f+\Delta u)/u$ which gives $\mu$ once $u$ is known; of course we have $\mu=+\infty$ whenever $u=0$. Therefore, working with the class $\M_0(D)$ is in this case equivalent to work with the class of functions $\{u\in H^1_0(D),\ u\ge0,\ \Delta u+f\ge0\}$, which is a closed convex subset of the Sobolev space $H^1_0(D)$.

The $\gamma$-convergence can be extended to the relaxed space $\M_0(D)$: we have $\mu_n\to\mu$ in the $\gamma$-convergence if for every $f\in L^2(D)$ (it is equivalent to require it only for $f=1$) the solutions $\RR_{\mu_n}(f)$ converge to $\RR_\mu(f)$ weakly in $H^1_0(D)$. The main properties of the $\gamma$-convergence on the space $\M_0(D)$ are listed below.

\begin{enumerate}
\item The space $\M_0(D)$ endowed with the $\gamma$-convergence is a compact metric space; an equivalent distance to the $\gamma$-convergence is
$$d_\gamma(\mu_1,\mu_2)=\|\RR_{\mu_1}(1)-\RR_{\mu_2}(1)\|_{L^2(D)}.$$

\item The class $\A(D)$ is included in $\M_0(D)$ via the identification $\Omega\mapsto\infty_{D\setminus\Omega}$ and $\A(D)$ is dense in $\M_0(D)$ for the $\gamma$-convergence. Actually also the class of all smooth domains $\Omega$ is dense in $\M_0(D)$.

\item The measures of the form $a(x)\,dx$ with $a\in L^1(D)$ belong to $\M_0(D)$ and are dense in $\M_0(D)$ for the $\gamma$-convergence. Actually also the class of measures $a(x)\,dx$ with $a$ smooth is dense in $\M_0(D)$.

\item If $\mu_n\to\mu$ for the $\gamma$-convergence, then the spectrum of the compact resolvent operator $\RR_{\mu_n}$ converges to the spectrum of $\RR_\mu$; in other words, the eigenvalues of the Schr\"odinger-like operators $-\Delta+\mu_n$ defined on $X_{\mu_n}(D)$ converge to the corresponding eigenvalues of the operator $-\Delta+\mu$.
\end{enumerate}

Since the $\gamma$-convergence is not compact, in order to treat shape optimization problems it is useful to introduce (see \cite{bubu05}) a convergence much weaker than $\gamma$, that makes the class $\A(D)$ compact. We call {\it weak $\gamma$} this new convergence and we denote it by $w\gamma$.

\begin{defi}\label{dwgamma}
We say that a sequence $(\Omega_n)$ of domains in $\A(D)$ $w\gamma$-converges to a domain $\Omega\in\A(D)$ if the solutions $w_{\Omega_n}=\RR_{\Omega_n}(1)$ converge weakly in $H^1_0(D)$ to a function $w\in H^1_0(D)$ (that we may take quasi-continuous) such that $\Omega=\{w>0\}$.
\end{defi}

We list below the main properties of the $w\gamma$-convergence on the space $\A(D)$; for all the related details we refer the reader to \cite{bubu05}.

\begin{enumerate}
\item We stress the fact that, in general, the function $w$ in Definition \ref{dwgamma} does not coincide with the solution $w_\Omega=\RR_\Omega(1)$; this happens only if $\Omega_n$ $\gamma$-converges to $\Omega$, which in general does not occur, because $\gamma$-convergence is not compact on $\A(D)$.

\item The $w\gamma$-convergence is weaker than the $\gamma$-convergence and, by its definition, it is compact, since the sequence $w_{\Omega_n}=\RR_{\Omega_n}(1)$ is bounded in $H^1_0(D)$ so it always has a subsequence $(\Omega_{n_k})$ weakly converging to some function $w\in H^1_0(D)$.

\item If $f\in L^1(D)$ is a nonnegative function, then the mapping $\Omega\mapsto\int_\Omega f\,dx$ is $w\gamma$-lower semicontinuous on $\A(D)$.

\item If $F:\A(D)\to[-\infty,+\infty]$ is a $\gamma$-lower semicontinuous shape functional which is monotone decreasing with respect to the set inclusion, then $F$ is $w\gamma$-lower semicontinuous. For instance, integral functionals like $\int_Dj(x,u_\Omega)\,dx$ with $j(x,\cdot)$ decreasing, where $u_\Omega=\RR_\Omega(f)$ and $f\ge0$, and spectral functionals like $\Phi\big(\lambda(\Omega)\big)$ with $\Phi$ increasing in each variable, are $w\gamma$-lower semicontinuous.

\end{enumerate}

\subsection{Minimizing movements}\label{ssmovem}

We recall here some notions and results in the direction of variationally-driven evolutions in metric spaces. In particular, we shall mention {\it generalized minimizing movements} and {\it curves of maximal slope} and their relation with {\it gradient flows} whenever a Hilbertian structure is available. In particular, we summarize concepts and results of interest for our purposes, referring to \cite{ags05} for further details.

In all of the following, $(X,d)$ is a complete metric space, $u_0\in X$ is an initial condition, and $F:X\to ]-\infty,+\infty]$ is a proper functional defined on $X$ with {\it effective domain} $D(F)=\{x\in X\ :\ F(x)<+\infty\}$. We let $\tau(d)$ denote the topology in $X$ induced by the metric $d$.

At first, we shall mention the so-called {\it minimizing movements} theory which was introduced by De Giorgi in \cite{dg93} in order to study evolution problems with an underlying variational structure. The framework of the theory is very general and applies both to quasistatic evolutions as well as to gradient flows, under rather mild assumptions.

For every fixed $\eps>0$ the {\it implicit Euler scheme} of time step $\eps$ and initial condition $u_0$ consists in constructing a function $u_\eps(t)=w([t/\eps])$, where $[\cdot]$ stands for the integer part function, in the following way
$$w(0)=u_0,\qquad w(n+1)\in\argmin\left\{F(v)+\frac{d^2(v,w(n))}{2\eps}\right\}.$$

\begin{defi}[Minimizing movements]\label{dmovem}
We say that $u:[0,T]\to X$ is a \emph{minimizing movement} associated to the functional $F$ and the topology $\tau$, with initial condition $u_0$, and we write $u\in\MM(F,\tau,u_0)$, if
\begin{equation}\label{convu}
u_{\eps}(t)\stackrel{\tau}{\to}u(t)\qquad\forall t\in[0,T].
\end{equation}
If the latter convergence holds for a subsequence $\eps_n\to0$, we say that $u:[0,T]\to X$ is a \emph{generalized minimizing movement} and we write $u\in\GMM(F,\tau,u_0)$.

\end{defi}

We say that a trajectory $u:[0,T]\to X$ belongs to $AC^p(0,T;U)$, $p\in[1,\infty]$, if there exists $m\in L^p(0,T)$ such that
\be\label{metric_dev}
d(u(s),u(t))\le\int_s^t m(r)\,dr\qquad\text{for all $0<s\le t<T$.}
\ee
One can prove that, for all $u\in AC^p(0,T;X)$, the limit
$$|u'|(t)=\lim_{s\to t}\frac{d(u(s),u(t))}{|t-s|}$$
exists for a.a. $t\in (0,T)$. This limit is usually referred to as the {\it metric derivative} of $u$ at $t$. In particular, the map $t\mapsto|u'|(t)$ turns out to belong to $L^p(0,T)$ and is minimal within the class of functions $m\in L^p(0,T)$ fulfilling \eqref{metric_dev}, see \cite[Sec. 1.1]{ags05}. Let us recall \cite[Prop.~2.2.3, p.~45]{ags05} the following.

\begin{theo}[Existence of generalized minimizing movements]\label{texmm}
Let the sublevels of $F$ be $\tau$-compact in $X$. Then, for every initial condition $u_0\in D(F)$ the set $\GMM(F,\tau,u_0)$ is non-empty. Moreover, we have that $\GMM(F,\tau,u_0)\subset AC^2(0,T;X)$.
\end{theo}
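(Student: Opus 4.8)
The plan is to run the classical variational (minimizing-movement) scheme: first show that the implicit Euler iteration is well posed, then derive the fundamental a priori dissipation estimate, and finally pass to the limit $\eps\to0$ through a refined Ascoli--Arzel\`a argument. For well-posedness, at each step one minimizes $v\mapsto F(v)+d^2(v,w(n))/(2\eps)$. Because $d$ is $\tau$-lower semicontinuous and the sublevels of $F$ are $\tau$-compact---so that $F$ itself is $\tau$-lower semicontinuous---this functional is $\tau$-lower semicontinuous; since $d^2\ge0$ its sublevels are contained in those of $F$ and hence are $\tau$-compact. The direct method then furnishes a minimizer, so $w(n+1)$ exists for every $n$ and the whole discrete trajectory is defined.

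Next I would extract the energy estimate. Comparing the minimizer $w(n+1)$ with the competitor $w(n)$ yields
$$F(w(n+1))+\frac{d^2(w(n+1),w(n))}{2\eps}\le F(w(n)),$$
so that $F(w(n))\le F(u_0)$ for all $n$ and the entire orbit stays in the fixed $\tau$-compact sublevel $\{F\le F(u_0)\}$, on which the lower semicontinuous $F$ is bounded below by some $F_*>-\infty$. Summing the inequality telescopically gives the dissipation bound
$$\sum_{n\ge0}\frac{d^2(w(n+1),w(n))}{2\eps}\le F(u_0)-F_*.$$

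Then I would interpolate and pass to the limit. For $0\le s\le t\le T$, Cauchy--Schwarz applied to this bound (there being at most $(t-s)/\eps+1$ steps between $s$ and $t$) controls the piecewise-constant interpolant $u_\eps(t)=w([t/\eps])$ by an asymptotic $1/2$-H\"older estimate
$$d\big(u_\eps(s),u_\eps(t)\big)\le\sqrt{2\big(F(u_0)-F_*\big)}\,\sqrt{t-s+\eps},$$
uniformly in $\eps$. Since for each fixed $t$ the values $\{u_\eps(t)\}$ lie in the $\tau$-compact set $\{F\le F(u_0)\}$, a Helly-type diagonal extraction over a countable dense set of times, extended to every $t$ by the uniform H\"older bound, produces a subsequence $\eps_n\to0$ and a curve $u$ with $u_{\eps_n}(t)\stackrel{\tau}{\to}u(t)$ for all $t\in[0,T]$; hence $\GMM(F,\tau,u_0)\ne\emptyset$. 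Taking the $\liminf$ in the H\"older estimate and invoking the $\tau$-lower semicontinuity of $d$ gives $d(u(s),u(t))\le\sqrt{2(F(u_0)-F_*)}\,\sqrt{t-s}$, i.e.\ $u\in AC^2(0,T;X)$ with metric derivative bounded by $\sqrt{2(F(u_0)-F_*)}$. Since every element of $\GMM(F,\tau,u_0)$ arises precisely as such a $\tau$-limit of interpolants, all of which obey the same bound, the estimate passes to each of them, yielding the inclusion $\GMM(F,\tau,u_0)\subset AC^2(0,T;X)$.

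The main obstacle is this final passage to the limit. Because $\tau$ may be strictly weaker than the metric topology and need not be metrizable, the classical Ascoli--Arzel\`a theorem does not apply verbatim. One must decouple the two roles played by $d$ and $\tau$---the metric $d$ supplies the equicontinuity (H\"older) estimate, whereas compactness and the actual convergence are available only in $\tau$---and reconcile them through the $\tau$-lower semicontinuity of $d$, which is exactly what lets the metric H\"older bound survive in the $\tau$-limit. This refined compactness statement (in the spirit of \cite{ags05}) is the technical heart of the argument.
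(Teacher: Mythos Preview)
The paper does not prove this theorem; it simply quotes it from \cite[Prop.~2.2.3]{ags05}. Your sketch is precisely the standard argument from that reference (well-posedness of the scheme via the direct method, telescopic energy dissipation, equicontinuity of the interpolants, refined Ascoli--Arzel\`a with a weak topology $\tau$ for compactness and the metric $d$ for equicontinuity, reconciled through the $\tau$-lower semicontinuity of $d$). So at the level of strategy there is nothing to compare.

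There is, however, one genuine gap in your final step. From the global H\"older bound
\[
d\big(u(s),u(t)\big)\le\sqrt{2\big(F(u_0)-F_*\big)}\,\sqrt{t-s}
\]
you conclude ``i.e.\ $u\in AC^2(0,T;X)$''. This implication is false: $1/2$-H\"older continuity does \emph{not} imply $AC^2$. The scalar function $t\mapsto\sqrt t$ is $1/2$-H\"older on $[0,1]$, yet any $m\in L^2(0,1)$ with $\int_0^t m\ge\sqrt t$ for all $t$ would satisfy $\|m\|_{L^2(0,t)}\ge1$ for every $t>0$, which is impossible. The H\"older estimate is strong enough to run the compactness argument and identify a pointwise $\tau$-limit $u$, but it is too crude to yield the $AC^2$ regularity.

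The correct route is to retain the full $L^2$ information contained in the dissipation estimate. Define the piecewise-constant discrete metric speed $|u_\eps'|(r)=d(w(n{+}1),w(n))/\eps$ for $r\in\big(n\eps,(n{+}1)\eps\big]$; the telescopic bound reads exactly $\int_0^\infty|u_\eps'|^2\,dr\le2\big(F(u_0)-F_*\big)$. Extract a weak $L^2$ limit $A$ along the same subsequence, and then use $d\big(u_\eps(s),u_\eps(t)\big)\le\int_{s_\eps}^{t_\eps}|u_\eps'|$ together with the $\tau$-lower semicontinuity of $d$ and weak $L^2$ convergence to obtain $d\big(u(s),u(t)\big)\le\int_s^t A(r)\,dr$, whence $u\in AC^2(0,T;X)$ with $|u'|\le A$ a.e. This is how \cite{ags05} actually closes the argument.
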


\subsection{Curves of maximal slope}

We say that a function $g:X\to[0,+\infty]$ is a {\it strong upper gradient} for the functional $F$ if, for every curve $u\in AC(0,T;X)$, the function $g\circ u$ is Borel and \cite[Def.~1.2.1, p. 27]{ags05}
\be\label{e:2.3}
|F(u(t))-F(u(s))|\le\int_s^t g(u(r))|u'|(r)\,dr
\quad\text{for all $0<s\le t<T$}.
\ee
In particular, if $g$ is a strong upper gradient for the functional $F$ and $g\circ u\in L^1(0,T)$ we have that $F\circ u$ is absolutely continuous and
$$|(F\circ u)'|\le (g\circ u)|u'|\qquad\text{a.e. in }(0,T).$$

\begin{defi}[Curve of maximal slope]
Let $g:X\to[0,+\infty]$ be a strong upper gradient for $F$. A trajectory $u\in AC(0,T;X)$ is said to be a \emph{curve of maximal slope} for $F$ with respect to its strong upper gradient $g$ if
\be\label{e:differential-equality}
{}-(F\circ u)'(t)=|u'|^2(t)=g^2(u(t))\qquad\forae t\in(0,T)
\ee
In particular, $F\circ u$ is locally absolutely continuous in $(0,T)$, $g\circ u\in L^{2}(0,T)$, and the energy identity
\begin{gather}\label{abstra-enide}
\frac{1}{2}\int_s^t |u'|^2(r)\,dr+\frac{1}{2}\int_s^t g^2(u(r))\,dr+F(u(t)) =F(u(s))
\end{gather}
holds in each interval $[s,t]\subset(0,T)$.
\end{defi}

The notion of curve of maximal slope is the natural extension to metric spaces of gradient flows in the Hilbertian setting. In particular, in case $X$ is a Hilbert space with scalar product $\langle\cdot,\cdot\rangle$ and norm $\|\cdot\|$ and $F$ is, say, Fr\'echet differentiable one can readily check that 
\begin{align}
&u'+DF(u)=0\ \Longleftrightarrow\ \frac12\|u'+DF(u)\|^2=0\nonumber\\
&\quad\Longleftrightarrow\ \frac12\|u'\|^2
+\frac12\|DF(u)\|^2+\langle DF(u),u'\rangle=0\nonumber\\
&\quad\Longleftrightarrow\ -(F\circ u)'=\|u'\|^2=\|DF(u)\|^2.\label{h}
\end{align} 
Hence, in the case of a smooth functional $F$ the two notions of gradient flow and curve of maximal slope coincide. More generally, curves of maximal slope in a Hilbert space correspond to gradient flows whenever some mild assumption is made on the Fr\'echet subdifferential $\partial F$ of $F$. The latter subdifferential is defined at points $u\in D(F)$ as
$$v\in\partial F(u)\ \Longrightarrow\ \limsup_{w\to u}\frac{F(w)-(F(u)+\langle v,w-u\rangle)}{\|w-u\|}\ge0$$
with $D(\partial F)=\{u\in D(F)\ :\ \partial F(u)\neq\emptyset\}$. In particular, we have the following \cite[Prop. 1.4.1, p. 34]{ags05}.

\begin{prop}[Curves of maximal slope $=$ gradient flows] Let $(X,d)$ be a Hilbert space endowed with its strong topology. Moreover, assume that $\partial F(u)$ is weakly closed for every $u\in D(\partial F)$. Then, $u$ is a curve of maximal slope for $F$ with respect to  $ v\mapsto\|\partial^\circ F(v)\|$  if and only if
$$\begin{cases}
t\mapsto F(u(t))\quad\hbox{is a.e. equal to a non-decreasing function,}\\
u'(t)+\partial^\circ F(u(t))\ni0\quad\hbox{for a.e. }t\in(0,T),
\end{cases}$$
where $\partial^\circ F(u)$ is the subset of elements of minimal norm in $\partial F(u)$.
\end{prop}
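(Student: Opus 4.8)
The plan is to reduce the equivalence to a single \emph{chain rule} for the Fréchet subdifferential along absolutely continuous curves, after which both implications follow from Young's inequality and the equality case of Cauchy--Schwarz. Throughout write $g(v)=\|\partial^\circ F(v)\|$, and recall that, $X$ being a Hilbert space, every $u\in AC(0,T;X)$ is (strongly) differentiable at a.e.\ $t$ with $\|u'(t)\|=|u'|(t)$; moreover the minimal-norm element $\partial^\circ F(u)$ is well defined precisely because $\partial F(u)$ is assumed weakly closed, so that a norm-minimizing sequence in $\partial F(u)$ admits a weakly convergent subsequence whose limit lies in $\partial F(u)$.

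The core step I would isolate is the following chain rule: if $u\in AC(0,T;X)$, if $F\circ u$ is absolutely continuous, and if $t\mapsto\xi(t)\in\partial F(u(t))$ is a measurable selection, then
\[
(F\circ u)'(t)=\langle\xi(t),u'(t)\rangle\qquad\text{for a.e. }t\in(0,T).
\]
To prove this I would fix a point $t$ of simultaneous differentiability of $u$ and $F\circ u$ and insert $w=u(t+h)$, $u=u(t)$ into the subdifferential inequality $F(w)\ge F(u)+\langle\xi(t),w-u\rangle+o(\|w-u\|)$. Using $u(t+h)-u(t)=h\,u'(t)+o(h)$ and dividing by $h>0$ gives $(F\circ u)'(t)\ge\langle\xi(t),u'(t)\rangle$, while dividing by $h<0$ reverses the inequality; the two together yield the claimed identity.

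With the chain rule in hand both directions are short. For the implication gradient flow $\Rightarrow$ curve of maximal slope I would take $\xi(t)=\partial^\circ F(u(t))$ and substitute $u'(t)=-\xi(t)$: then $\|u'(t)\|=\|\xi(t)\|=g(u(t))$, so $|u'|^2(t)=g^2(u(t))$, and the chain rule gives $(F\circ u)'(t)=\langle\xi(t),-\xi(t)\rangle=-g^2(u(t))$, which is exactly \eqref{e:differential-equality}. Conversely, assume $-(F\circ u)'(t)=|u'|^2(t)=g^2(u(t))$. Again with $\xi(t)=\partial^\circ F(u(t))$, the chain rule and the maximal-slope identity give $\langle\xi(t),u'(t)\rangle=(F\circ u)'(t)=-|u'|^2(t)=-\|u'(t)\|^2$, while simultaneously $\|\xi(t)\|=g(u(t))=|u'|(t)=\|u'(t)\|$. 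Hence $\langle\xi(t),u'(t)\rangle=-\|\xi(t)\|\,\|u'(t)\|$ with $\|\xi(t)\|=\|u'(t)\|$, and the equality case of Cauchy--Schwarz forces $u'(t)=-\xi(t)=-\partial^\circ F(u(t))$, i.e.\ the gradient-flow inclusion; the monotonicity of $t\mapsto F(u(t))$ then follows from $(F\circ u)'=-|u'|^2\le0$.

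The main obstacle is justifying that the chain rule actually applies, namely that $F\circ u$ is absolutely continuous (hence a.e.\ differentiable) along the trajectory. In the maximal-slope direction this is built into the definition through the strong upper gradient bound \eqref{e:2.3} and the ensuing energy identity \eqref{abstra-enide}; in the gradient-flow direction it has to be recovered from the inclusion, together with the measurability of the selection $t\mapsto\partial^\circ F(u(t))$ and the fact that $v\mapsto\|\partial^\circ F(v)\|$ is a genuine strong upper gradient. All of these points hinge on the weak-closedness hypothesis on $\partial F$, which is what makes $\partial^\circ F$ a well-defined, suitably measurable minimal selection; this is the delicate ingredient, and it is precisely where the abstract results of \cite{ags05} are invoked.
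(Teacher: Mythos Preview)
The paper does not actually prove this proposition: it is quoted verbatim from \cite[Prop.~1.4.1, p.~34]{ags05} and no argument is given in the text. So there is no ``paper's own proof'' to compare against; the relevant benchmark is the argument in \cite{ags05}.

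Your approach is essentially the one underlying \cite[Prop.~1.4.1]{ags05}: reduce both implications to a chain rule $(F\circ u)'(t)=\langle\xi(t),u'(t)\rangle$ for any measurable selection $\xi(t)\in\partial F(u(t))$, and then read off the two directions via the equality case of Cauchy--Schwarz. Your derivation of the chain rule at a point of joint differentiability is correct: the Fr\'echet subdifferential inequality applies to $w=u(t+h)$ for both signs of $h$, and dividing by $h$ gives the two opposite inequalities. You also correctly flag the genuinely delicate issues, namely (i) the absolute continuity of $F\circ u$ in the gradient-flow direction, (ii) the measurability of $t\mapsto\partial^\circ F(u(t))$, and (iii) the fact that $v\mapsto\|\partial^\circ F(v)\|$ is a strong upper gradient; these are exactly the points where \cite{ags05} does real work, and where the weak-closedness assumption on $\partial F(u)$ is used. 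One small remark: the statement in the paper reads ``non-decreasing'', but your argument (correctly) yields that $t\mapsto F(u(t))$ is non-\emph{increasing}, since $(F\circ u)'=-|u'|^2\le0$; this is a typo in the paper, not a flaw in your proof.
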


In the general metric setting the natural candidate for serving as a strong upper gradient for $F$ is the {\it local slope} (see \cite{ags05,Cheeger99,DeGiorgi80}) of $F$ defined at $u\in D(F)$ as
$$\ls(u)=\limsup_{v\to u}\frac{(F(u)-F(v))^+}{d(u,v)}.$$
Note that indeed the local slope plays the role of the norm of $\partial F$ (see \eqref{h}). In particular, in case $X$ is a Hilbert space and $F$ is Fr\'echet  differentiable  at $u\in D(F)$, then $\ls(u)=\|\partial F(u)\|$.

In general, the function $u\mapsto\ls(u)$ cannot be expected to be lower
semicontinuous. On the other hand, semicontinuity is crucial in order to possibly pass to the limit in \eqref{abstra-enide} (or, rather, in its time-discrete analogue). A way out from this obstruction consists in restricting the analysis to {\it $\lambda$-geodesically convex} functionals. In particular, we say that a trajectory $\gamma:[0,1]\to X$ is a {\it constant-speed geodesic} if
$$d(\gamma(s),\gamma(t))=(t-s)d(\gamma(0),\gamma(1))\qquad\forall0\le s\le t\le T$$
and that a functional $F$ is {\it $\lambda$-geodesically convex} for $\lambda\in\R$ if, for all $u_0,u_1\in D(F)$, there exists a constant-speed geodesic $\gamma$ with $\gamma(0)=u_0$ and $\gamma(1)=u_1$ such that 
$$F(\gamma(t))\le(1-t)F(u_0)+tF(u_1)-\frac{\lambda}{2}t(1-t)d^2(u_0,u_1) \qquad\forall t\in[0,1].$$
In case $X$ has a linear structure, we shall simply (and classically) refer to the latter convexity condition as {\it $\lambda$-convexity}.
 
If $F$ is $\lambda$-geodesically convex and $\tau(d)$-lower semicontinuous then \cite[Cor. 2.4.10, p. 54]{ags05} the local slope $\ls$ is a strong upper gradient for $F$ and it is $\tau(d)$-lower semicontinuous as well.  The same holds if we relax the geodesic convexity assumption above by asking for the weaker property
\begin{align}
&\text{for all $ v_0,\, v_1 \in D(F)$ there exists a curve $\gamma$ with $\gamma(0)=v_0$ and $\gamma(1)=v_1$ such that}\nonumber\\
&v \mapsto \Phi(\epsi,v_0,v):=\frac{1}{2\epsi} d(v,v_0) + F(v) \ \text{is $(\epsi^{-1 }+ \lambda)$-convex on $\gamma$ for all $0<\epsi<1/\lambda^-$}
  \label{ca} 
\end{align}
along with the convention $1/\lambda^-=+\infty$ for $\lambda \geq 0$.

In particular, this entails the following \cite[Thm. 2.3.3, p. 46]{ags05}.

\begin{theo}[$\GMM$ are curves of maximal slope] Let $F$ fulfill the convexity assumption \eqref{ca} being  $\tau(d)$-lower semicontinuous, and coercive, namely
$$\exists  \epsi^*>0,\,u^*\in X:\ \inf  \Phi(\epsi^*,u^*,\cdot) >-\infty.$$
Then, given $u_0\in D(F)$, every $u\in \GMM(F,\tau(d),u_0)$ is a curve of maximal slope for the functional $F$ with respect to its strong upper gradient $\ls$.
\end{theo}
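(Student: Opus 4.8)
The plan is to follow De Giorgi's minimizing-movement scheme and pass to the limit in a discrete energy inequality, the decisive point being that assumption \eqref{ca} forces the lower semicontinuity of the local slope $\ls$. First I would record the elementary discrete estimate coming from the minimality defining the implicit Euler scheme: testing the optimality of $w(n+1)$ against the competitor $w(n)$ gives
$$F(w(n+1))+\frac{d^2(w(n+1),w(n))}{2\eps}\le F(w(n)),$$
and summing over $n$ up to $[t/\eps]$ telescopes into the a priori bound
$$\sum_n\frac{d^2(w(n+1),w(n))}{2\eps}+F(u_\eps(t))\le F(u_0).$$
Together with the coercivity hypothesis this controls both the energy and the total squared discrete velocity of the piecewise-constant interpolants $u_\eps$ uniformly in $\eps$, so that any $\GMM$ limit $u$ lies in $AC^2(0,T;X)$, consistently with Theorem \ref{texmm}.

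Next I would sharpen this estimate via De Giorgi's variational interpolation. For $t\in((n-1)\eps,n\eps]$ one sets $\tilde u_\eps(t)$ to be a minimizer of $v\mapsto\Phi(t-(n-1)\eps,w(n-1),v)$ and differentiates the value function in time. This is exactly where \eqref{ca} enters: the $(\eps^{-1}+\lambda)$-convexity of $\Phi(\eps,v_0,\cdot)$ along a connecting curve lets one bound the slope of each minimizer by the incremental ratio, $\ls(w(n+1))\le d(w(n+1),w(n))/\eps$, and yields the refined discrete inequality
$$\frac12\sum_{k}\frac{d^2(w(k+1),w(k))}{\eps}+\frac12\int_0^t\ls^2(\tilde u_\eps(r))\,dr+F(u_\eps(t))\le F(u_0),$$
which is the discrete counterpart of the energy identity \eqref{abstra-enide}.

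Then I would let $\eps\to0$ along the subsequence realizing the $\GMM$. The term $F(u_\eps(t))$ is handled by the $\tau(d)$-lower semicontinuity of $F$; the velocity term passes to $\int_0^t|u'|^2$ by lower semicontinuity of the metric speed under convergence of the interpolants; and the slope term is controlled precisely because \eqref{ca} guarantees, through the cited result, that $\ls$ is $\tau(d)$-lower semicontinuous and a strong upper gradient for $F$. The outcome is the upper energy estimate
$$\frac12\int_0^t|u'|^2(r)\,dr+\frac12\int_0^t\ls^2(u(r))\,dr+F(u(t))\le F(u_0)=F(u(0)).$$

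Finally the matching lower estimate is automatic from the strong-upper-gradient property: \eqref{e:2.3} combined with Young's inequality gives
$$F(u(0))-F(u(t))\le\int_0^t\ls(u(r))|u'|(r)\,dr\le\frac12\int_0^t|u'|^2(r)\,dr+\frac12\int_0^t\ls^2(u(r))\,dr.$$
Comparing this with the upper estimate forces equality throughout, and the equality case in Young's inequality yields $\ls(u(r))=|u'|(r)$ for a.e.\ $r$ together with the full energy identity \eqref{abstra-enide}; this is exactly the curve-of-maximal-slope condition \eqref{e:differential-equality}. I expect the main obstacle to be the sharp discrete estimate via De Giorgi interpolation and, above all, the limit passage in the slope term: the lower semicontinuity of $\ls$ is genuinely delicate and is the sole reason the convexity hypothesis \eqref{ca} is imposed.
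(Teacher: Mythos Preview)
The paper does not give its own proof of this statement: it is quoted verbatim from \cite[Thm.~2.3.3, p.~46]{ags05}, and the surrounding text merely records that, under \eqref{ca}, the local slope $\ls$ is a $\tau(d)$-lower semicontinuous strong upper gradient (citing \cite[Cor.~2.4.10]{ags05}). So there is nothing to compare against in the paper itself.

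Your outline is exactly the Ambrosio--Gigli--Savar\'e strategy that underlies the cited result: the crude telescoping estimate, its refinement via De Giorgi's variational interpolant, and the passage to the limit using lower semicontinuity of $F$, of the metric speed, and---crucially---of $\ls$, the latter being the genuine content of \eqref{ca}. Two points deserve a word of caution when you flesh it out. First, the slope estimate at the discrete minimizer under \eqref{ca} is not literally $\ls(w(n{+}1))\le d(w(n{+}1),w(n))/\eps$ but carries a correction of order $\lambda\eps$; this is harmless in the limit but should be written correctly. Second, the limit passage in the slope integral requires knowing that the De Giorgi interpolant $\tilde u_\eps(r)$ stays $d$-close to the piecewise-constant interpolant $u_\eps(r)$ (this follows from the discrete energy estimate), so that $\tilde u_\eps(r)\to u(r)$ along the $\GMM$ subsequence and Fatou applies. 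With these details supplied, your argument is the proof.
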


A suitably strengthened version of the convexity assumption \eqref{ca} provides the possibility of proving a generation result. In particular, we shall be dealing with the following
\begin{align}
&\text{for all $ v_*, \, v_0,\, v_1 \in D(F)$ there exists a curve $\gamma$ with $\gamma(0)=v_0$ and $\gamma(1)=v_1$ such that}\nonumber\\
&v \mapsto \Phi(\epsi,v_*,v) \ \text{is $(\epsi^{-1 }+ \lambda)$-convex on $\gamma$ for all $0<\epsi<1/\lambda^-$.}
  \label{ca2} 
\end{align}
Note that property \eqref{ca2} is stronger than the former \eqref{ca} for the latter follows as a particular case with $v_* = v_0$. On the other hand, \eqref{ca2} combines in a crucial way the geodesic convexity properties of the functional and of the curvature properties of the underlying metric space \cite{ags05}. One can in particular check that \eqref{ca2} ensues if, in addition to the $\lambda$-geodesic convexity of $F$ one requires $v \to d^2(v,v_*)$ to be convex for all $v_*\in D(F)$. This is indeed the case of geodesically convex functionals on non-positively curved metric spaces such as Hilbert spaces \cite{mayer98}.
Note that, in the setting of assumption \eqref{ca2},  no compactness of the sublevels of $F$ is needed in order to prove the existence of curves of maximal slope and we have the following \cite[Thm. 4.0.4, p. 77]{ags05}.

\begin{theo}[Generation of the evolution semigroup]\label{gen}
Let $F$  fulfill the convexity assumption \eqref{ca2} being $\tau(d)$-lower semicontinuous and coercive. Then, for any given $u_0\in\overline{D(F)}$ there exists a unique $u=S(u_0)\in \MM(F,\tau(d),u_0)$. Moreover, $u$ is a locally Lipschitz curve of maximal slope for $F$ with respect to its strong upper gradient $\ls$, $u(t)\in D(\ls)$ for all $t\in(0,T)$, and the map $t\mapsto S(u_0)(t)$ is a $\lambda$-contraction semigroup, namely
$$d(S(u_0)(t),S(v_0)(t))\le e^{-\lambda t}d(u_0,v_0)\quad\forall u_0,v_0\in\overline{D(F)}.$$
\end{theo}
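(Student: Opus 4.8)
The plan is to follow the generation strategy for the implicit Euler (minimizing movements) scheme, in the spirit of the Crandall--Liggett theorem adapted to the metric setting. First I would verify that, under the convexity assumption \eqref{ca2}, the incremental functional
$$v\mapsto F(v)+\frac{d^2(v,w)}{2\epsi}$$
appearing in the implicit Euler scheme is strictly --- indeed $(\epsi^{-1}+\lambda)$- --- convex along a suitable curve joining any two competitors, for every base point $w$ and every $0<\epsi<1/\lambda^-$. Together with coercivity and $\tau(d)$-lower semicontinuity, this yields a unique minimizer, so that the resolvent $J_\epsi\colon\overline{D(F)}\to D(F)$ defined by $J_\epsi w=\argmin\big\{F(v)+d^2(v,w)/(2\epsi)\big\}$ is single valued. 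The scheme then produces a unique discrete trajectory $u_\epsi(t)=J_\epsi^{[t/\epsi]}u_0$.

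The core of the proof is a set of quantitative a priori estimates for $J_\epsi$. Exploiting the combined convexity encoded in \eqref{ca2} --- which couples the $\lambda$-convexity of $F$ with the convexity of $v\mapsto d^2(v,v_*)$ --- I would establish the contraction estimate
$$d(J_\epsi u,J_\epsi v)\le\frac{1}{1+\lambda\epsi}\,d(u,v),$$
along with the standard one-step energy and slope bounds. Iterating these à la Crandall--Liggett and comparing two discrete solutions $u_\epsi$ and $u_\sigma$ with possibly different steps, one obtains Cauchy estimates roughly of the form $d(u_\epsi(t),u_\sigma(t))\le C(t)\sqrt{\epsi+\sigma}$. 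This upgrades the merely subsequential convergence of Theorem \ref{texmm} to convergence of the \emph{whole} family, $u_\epsi(t)\to u(t)$ locally uniformly in $t$; hence $u\in\MM(F,\tau(d),u_0)$ is uniquely determined, and the same estimates deliver its local Lipschitz continuity.

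It then remains to identify the limit and read off its properties. Passing to the limit in the discrete variational inequalities and in the discrete energy identity, and using that under \eqref{ca2} the local slope $\ls$ is a $\tau(d)$-lower semicontinuous strong upper gradient (as recalled just before the statement), I would show that $u$ satisfies \eqref{e:differential-equality} and the energy identity \eqref{abstra-enide}, i.e.\ it is a curve of maximal slope for $F$ with respect to $\ls$; the a priori bound on the slope along the scheme provides the regularizing effect $u(t)\in D(\ls)$ for every $t\in(0,T)$. Finally, uniqueness of the limiting curve yields the semigroup property $S(t+s)=S(t)\circ S(s)$ by restarting the flow at time $s$, while passing to the limit in the iterated contraction estimate, using $(1+\lambda\epsi)^{-t/\epsi}\to e^{-\lambda t}$, produces the announced $\lambda$-contraction $d(S(u_0)(t),S(v_0)(t))\le e^{-\lambda t}d(u_0,v_0)$.

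The step I expect to be the main obstacle is precisely the derivation of the sharp discrete contraction estimate and the attendant Crandall--Liggett Cauchy argument: this is where \eqref{ca2} is indispensable, since in a genuinely nonlinear metric space one cannot exploit linearity of the resolvent and must instead play the convexity of $F$ against the curvature properties of the distance. Once these estimates are in hand, the curve-of-maximal-slope characterization together with the semigroup and contraction statements follow comparatively cheaply.
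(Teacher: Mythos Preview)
The paper does not actually prove this theorem: it is quoted from \cite[Thm.~4.0.4, p.~77]{ags05} and stated without proof, so there is no ``paper's own proof'' to compare against. Your sketch is a faithful outline of the strategy of that reference (Chapter~4 of \cite{ags05}): uniqueness of the resolvent via the $(\epsi^{-1}+\lambda)$-convexity in \eqref{ca2}, a discrete contraction estimate for $J_\epsi$, Crandall--Liggett-type Cauchy estimates upgrading subsequential to full convergence, and passage to the limit in the discrete energy identity using lower semicontinuity of $\ls$.

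One point worth sharpening: you write that coercivity and $\tau(d)$-lower semicontinuity ``yield a unique minimizer'' of the incremental problem. In a bare complete metric space with no compactness of sublevels (and the paper explicitly stresses that none is assumed under \eqref{ca2}), lower semicontinuity and coercivity alone do not give existence. The existence part also comes from \eqref{ca2}: the strong convexity of $v\mapsto\Phi(\epsi,w,v)$ forces every minimizing sequence to be Cauchy, and completeness of $(X,d)$ then supplies the minimizer. This is exactly the mechanism in \cite{ags05}, and it is the reason \eqref{ca2} replaces compactness here; your sketch would benefit from making this explicit rather than folding it into ``coercivity $+$ lsc''. Otherwise the plan is sound and matches the cited source.
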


\section{Curves of maximal slope of capacitary measures}\label{smeas}

In this section we consider the compact metric space $(\M_0(D),d_\g)$ of capacitary measures endowed with the distance $d_\g$ introduced in Section \ref{ssgamma}. Let $F:\M_0(D)\to]-\infty,+\infty]$ be a $\gamma$-lower semicontinuous (relaxed shape) functional. By Theorem \ref{texmm} for every initial condition $\mu_0\in\M_0(D)$ with $\mu_0\in D(F)$ there exists $\mu\in \GMM(F,\tau(d_\g),\mu_0)$ and the discrete implicit Euler scheme reads
\be\label{bba01}
\mu_\vps^{n+1}\in\argmin\Big\{F(\mu)+\frac{1}{2\vps}d^2_\g(\mu_\vps^n,\mu)\Big\}.
\ee
The main purpose of this section is to study some properties of the generalized minimizing movement $\mu(t)$ and to see when it happens to be a curve of maximal slope.

There is a natural one-to-one map between $\M_0(D)$ and the convex set
\be\label{convexx}
X=\{w\in H^1_0(D)\ :\ w\ge0,\ 1+\Delta w\ge0\}\sq L^2(D),
\ee
given by
$$\mu\mapsto w_\mu:= \RR_\mu(1),\quad\hbox{with inverse}\quad w\mapsto\mu_w:=\frac{1+\Delta w}{w}\;.$$
Moreover, the metric structure on $\M_0(D)$ and $X$ is the same, since 
$$d_\g(\mu_1,\mu_2)=\|w_{\mu_1}-w_{\mu_2}\|_{L^2(D)}.$$
Therefore, every functional $F:\M_0(D)\to]-\infty,+\infty]$ can be identified with a functional $J:L^2(D)\to]-\infty,+\infty]$ with $D(J)\subset X$ by
$$F(\mu)=J(w_\mu)\quad\hbox{or, equivalently,}\quad J(w)=F(\mu_w).$$
The variational flow for $F$ in $\M_0(D)$ can be then obtained through the gradient flow of $J$ in $L^2(D)$, generated by the implicit Euler scheme
\be\label{eimplw}
w_\vps^{n+1}\in\argmin\Big\{J(w)+\frac{1}{2\vps}\int_D|w-w_\vps^n|^2\,dx\Big\}.
\ee

\begin{theo}[Monotonicity]\label{bba03}
Assume that $J:X\to]-\infty,+\infty]$ is decreasing, in the sense
\be\label{bba04}
w_1,w_2\in D(J),\ w_1\le w_2\hbox{ a.e.}\ \Lra\ J(w_1)\ge J(w_2).
\ee
Then, every $w\in \GMM(J,\tau(d_{L^2(D)}),w_{\mu_0})$ is increasing, in the sense that
$$t_1<t_2\ \Lra\ w(t_1)\le w(t_2)\hbox{ a.e.}$$
\end{theo}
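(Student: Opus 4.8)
The plan is to establish the monotonicity first at the level of the implicit Euler iterates \eqref{eimplw} and then to transfer it to the limit. Concretely, I would prove by induction that the discrete solutions satisfy $w_\vps^n\le w_\vps^{n+1}$ a.e. for every $n$, starting from $w_\vps^0=w_{\mu_0}\in D(J)$. Once this discrete monotonicity is available, the piecewise constant interpolant $u_\vps(t)=w_\vps^{[t/\vps]}$ is nondecreasing in $t$ (a.e. in $x$), since $t_1<t_2$ forces $[t_1/\vps]\le[t_2/\vps]$ and the iterates increase along the chain. As the $\GMM$ trajectory $w(t)$ is, by Definition \ref{dmovem}, the $L^2(D)$-limit along a subsequence $\vps_n\to0$ of $u_{\vps_n}(t)$ at each time $t$, the order relation passes to the limit because the cone $\{v\ge0\}$ is closed in $L^2(D)$: from $u_{\vps_n}(t_2)-u_{\vps_n}(t_1)\ge0$ a.e. one gets $w(t_2)-w(t_1)\ge0$ a.e. for $t_1<t_2$.

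The heart of the matter is the discrete step, which I would carry out by a lattice comparison argument. Let $a:=w_\vps^n\in D(J)$ and let $b:=w_\vps^{n+1}$ be any minimizer in \eqref{eimplw}; the goal is $b\ge a$ a.e. The natural competitor is $b\vee a$. Using the pointwise identity $b\vee a-a=(b-a)^+$ one computes
$$\int_D|b\vee a-a|^2\,dx=\int_D\big((b-a)^+\big)^2\,dx\le\int_D|b-a|^2\,dx,$$
with equality if and only if $(b-a)^-=0$ a.e., i.e. $b\ge a$ a.e. On the other hand $b\vee a\ge b$, so the monotonicity assumption \eqref{bba04} yields $J(b\vee a)\le J(b)$. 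Adding the two facts shows that $b\vee a$ has energy in \eqref{eimplw} no larger than that of the minimizer $b$; since $b$ is a minimizer, both inequalities must actually be equalities, and equality in the distance term forces $(b-a)^-=0$, that is $b\ge a$ a.e. Note that this argument uses no convexity or uniqueness, only that $J$ is order-reversing, which is exactly the hypothesis.

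The main obstacle, and the point that genuinely uses the structure of the constraint set $X$ in \eqref{convexx}, is to guarantee that $b\vee a$ is an admissible competitor, i.e. that $b\vee a\in X$ (and hence, being $\ge b\in D(J)$, that $J(b\vee a)$ is finite by \eqref{bba04}). The conditions $b\vee a\ge0$ and $b\vee a\in H^1_0(D)$ are immediate. The delicate one is $1+\Delta(b\vee a)\ge0$: recalling that $w\in X$ means precisely that $w\ge0$ and $-\Delta w\le1$ in $\mathcal D'(D)$, i.e. that $w$ is a subsolution of $-\Delta u=1$, the required stability is exactly the classical fact that the maximum of two subsolutions is again a subsolution. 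I would verify this through Kato's inequality: writing $b\vee a=\tfrac12\big(a+b+|a-b|\big)$ and using $\Delta|a-b|\ge\operatorname{sgn}(a-b)\,\Delta(a-b)$ in the sense of distributions, one finds on $\{b>a\}$ that $\Delta(b\vee a)\ge\Delta b\ge-1$ and on $\{b<a\}$ that $\Delta(b\vee a)\ge\Delta a\ge-1$, so that $1+\Delta(b\vee a)\ge0$ holds in $\mathcal D'(D)$. With this lattice property in hand the comparison above is legitimate, the induction closes, and the passage to the limit described in the first paragraph completes the proof.
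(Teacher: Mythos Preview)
Your proposal is correct and follows essentially the same line as the paper's proof: both use the competitor $\max\{w,w_\vps^n\}$, rely on the stability of $X$ under pointwise maxima (the paper simply states that the maximum of two subharmonic functions is subharmonic, while you spell this out via Kato's inequality), compare the dissipation terms, and then pass the discrete monotonicity to the $L^2$-limit. Your write-up is more explicit about the lattice property $b\vee a\in X$ and about the limit passage, but the core argument is identical.
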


\begin{proof}
Let $J$ be monotone in the sense of \eqref{bba04}. Given $w_\vps^n$, in the incremental step \eqref{eimplw} we have to solve the minimum problem
$$\min\Big\{J(w)+\frac{1}{2\vps}\int_D\|w-w_\vps^n\|^2\,dx\Big\}.$$
For every $w\in X$, the function $\max\{w,w_\vps^n\}$ still belongs to $X$,
since the maximum of two subharmonic functions is also subharmonic. Relying on the monotonicity of $J$ we then have that
$$J(\max\{w,w_\vps^n\})
+\frac{1}{2\vps}\int_D\|\max\{w,w_\vps^n\}-w_\vps^n\|^2\,dx
\le J(w)+\frac{1}{2\vps}\int_D\|w-w_\vps^n\|^2\,dx,$$
the inequality being strict as soon as $\max\{w,w_\vps^n\}\ne w$. Consequently, any minimizer $w:=w_\vps^{n+1}$ should satisfy $w\ge w_\vps^n$ a.e., and thus any discrete flow is increasing. Passing to the limit as the step size goes to zero, we obtain that any generalized minimizing movement is increasing.
\end{proof}

\begin{rema}\label{remmon}
We underline that the monotonicity assumption \eqref{bba04} is not equivalent to the monotonicity of measures. If $\mu_1\le\mu_2$ in the classical sense of measures or, weaker, in the sense
\be\label{emonmu}
\int_D\vphi^2(x)d\mu_1\le\int_D\vphi^2(x)\,d\mu_2\qquad\forall\vphi\in H^1_0(D),
\ee
then $w_{\mu_1}\ge w_{\mu_2}$ q.e. The converse is in general false; here is an example:
$$\mu_1=\infty\lfloor_{B(0,1)^c},\qquad\mu_2=1_{B(0,1)}dx+\infty\lfloor_{B(0,R)^c},$$
where $R$ is large enough, such that $w_{\mu_2}\ge w_{\mu_1}$ q.e. Clearly, $\mu_1\not\ge\mu_2$.
\end{rema}

\begin{rema}
A typical functional satisfying the monotonicity assumption is a functional depending on $w$, of the form
$$J(w)=\int_D j(x,w(x))\,dx,$$
where $j:D\times\R\to\R$ is continuous and decreasing in the second variable. In particular, we may take $j(x,w)=-w$ which leads to the energy of the system for the constant force $f\equiv1$.
\end{rema}

\begin{rema}
If $\mu_1\le\mu_2$ in the classical sense of measures, or in the weaker sense \eqref{emonmu}, then it is easy to see that $\lb_k(\mu_1)\le\lb_k(\mu_2)$. We do not know if this is still true under the weaker (see Remark \ref{remmon}) condition that $w_{\mu_1}\ge w_{\mu_2}$ q.e.
\end{rema}

\begin{exam}
An interesting question is the following: if we consider the generalized minimizing movement associated to the energy functional $J(w)=-\int_Dw(x)\,dx$ and start from a quasi-open set, will the flow remain in the family of quasi-open sets? 

As we show in the example below, by considering the topology of $\gamma$-convergence and allowing relaxation in general this does not happen, at least at the discrete level. This kind of phenomenon was numerically observed in the framework of quasi-static debonding membranes \cite{bbl08}, where the evolution takes place in the family of {\it relaxed domains}.

Let $D=B(0,2)$, $\Om_0=B(0,2)\sm\partial B(0,1)\sq\R^2$ and $J(w)=-\int_D w (x)\,dx$. Let us first notice that the mapping
\be\label{bba06}
w\mapsto J_\eps(w)=-\int_D w\,dx+\frac{1}{2\vps}\int_D|w-w_0|^2\,dx
\ee
is strictly convex. We will prove that the solution $w$ minimizing the first incremental step is of relaxed form, independently of the size of $\eps>0$. For this purpose, we will first show that $w$ is radially symmetric. It is not clear that the class $X$ in \eqref{convexx} is stable by Schwartz rearrangement, in spite of the fact that we can use the convexity of the mapping above. Indeed, since $w_0(x)=u_0(|x|)$ is radially symmetric, if $w$ is a solution of the incremental step, then any rotation $w\circ R$ of $w$, is also a solution. Using the strict convexity of \eqref{bba06}, we conclude that for any rotation $R$ the equality $w=w\circ R$ holds, so $w$ is radially symmetric.

Assume now by contradiction that $w$ corresponds to a non-relaxed domain, i.e. to a radially symmetric open set. This means that $w=w_{\Om}$, where $\Om$ is a union of open annuli, centered at the origin. 

Denoting by $A(s,t)$ the annulus $B(0,t)\setminus\overline{B(0,s)}$, with $s<t$, it is easy to see that optimal domains can only be of the form $\Omega_s=B(0,s)\cup A(s,2)$, which provide the corresponding radial solutions
$$w_s(x)=u_s(|x|)=\left\{
\begin{array}{ll}
(s^2-|x|^2)/4&\hbox{if }0\le |x|\le s\\
\ds\frac14 \Bigg(4-|x|^2+(s^2-4)\frac{\log(|x|/2)}{\log(s/2)}\Bigg)&\hbox{if }s\le |x|\le2.
\end{array}
\right.$$
In order to prove that relaxation occurs, it is enough to show that it is more effective to relax on $\partial B(0,1)$ the Dirichlet condition. In particular, given
$$\mu=\eps{\mathcal H}^1\lfloor_{\partial B(0,1)}+\infty\lfloor_{\partial B(0,2)},$$
we have that $F(\mu)$ is lower than any value $F(\Omega_s)$. The corresponding solution $\tilde w$ reads 
$$ \tilde w(x)=\left\{
\begin{array}{ll}
\eps+u_0(|x|)&\hbox{if }0\le|x|\le1\\
\ds\eps\frac{\log(|x|/2)}{\log(1/2)}+u_0(|x|)&\hbox{if }1\le|x|\le2,
\end{array}
\right.$$
and hence relaxation occurs at the first discrete step as soon as we prove that
\be\label{follows}
J_\eps(\tilde w)<J_\eps(w_s)\qquad\forall0<s<2.
\ee
Indeed, by defining $f(r)=\min\{1,\log(r/2)/\log(1/2)\}$, we have that
$$J_\eps(\tilde w)=-2\pi\int_0^2 u_0(r)r\,dr-2\pi\eps\int_0^2 f(r)r\,dr
+\pi\eps\int_0^2 f^2(r)r\,dr.$$
Hence, relation \eqref{follows} is equivalent to 
\be\label{follows2}
\int_0^2 (u_s-u_0)r\,dr-\frac{1}{2\eps}\int_0^2 |u_s-u_0|^2r\,dr
< \eps\int_0^2 fr\,dr-\frac{\eps}{2}\int_0^2 f^2r\,dr.
\ee
One can numerically check that the integral in the left-hand side above is non-positive (and indeed vanishes for $s=1$ only). Hence, relation \eqref{follows2} follows, as we have that 
$$\int_0^2\left(f-\frac{1}{2}f^2\right)r\,dr>0.$$
Note that the latter argument is independent of $\eps$. As such, relaxation is expected to happen instantaneously for any generalized minimizing movement starting from $\Omega_0$.
\end{exam}

We shall assume that $J:L^2(D)\to]-\infty,+\infty]$ is $\lambda$-convex, proper, and lower semicontinuous. For instance, $J$ could be of the form
\be
J(w)=\int_Dj(x,w(x))\,dx + I_X(w),\label{one}
\ee
where $j:D\times\R\to\R$ is a normal $\lambda$-convex integrand and $I_X$ is the indicator function of $X$, namely, $I_X(w)=0$ if $w\in X$ and $I_X=+\infty$ elsewhere. An example in this class is the {\it torsional rigidity} functional given by $j(x,w)=w$. Another example for $J$ is 
$$J(w)=\frac{1}{2}\int_D|\nabla w(x)|^2\,dx+\int_D j(x,w(x))\,dx+I_X(w).$$

\begin{prop}\label{lat}
Let the functionals $F:\M_0(D)\to]-\infty,+\infty]$ and $J:L^2(D)\to]-\infty,+\infty]$ with $D(J)\subset X$ be related by $J(w)=F(\mu_w)$ as above. We have the following
\begin{enumerate}
\item[\rm a)] $F$ is $\lambda$-geodesically convex if and only if $J$ is $\lambda$-convex,
\item[\rm b)] $F$ fulfills \eqref{ca2} if and only if $J$ fulfills \eqref{ca2}.
\end{enumerate}
\end{prop}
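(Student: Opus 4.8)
The plan is to observe that the assignment $T:\mu\mapsto w_\mu=\RR_\mu(1)$ is a bijective isometry of $(\M_0(D),d_\gamma)$ onto the convex set $X$ of \eqref{convexx}, equipped with the metric induced by the $L^2(D)$ norm, and that $F=J\circ T$ by the very definition of $J$. Since $\lambda$-geodesic convexity and condition \eqref{ca2} are purely metric notions, formulated only through the distance, the functional, and the connecting curves, both will transport verbatim along $T$. The whole proof thus reduces to identifying the constant-speed geodesics of $X$ and then transcribing the two properties across the isometry.

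First I would record that $T$ is an isometry: this is exactly the identity $d_\gamma(\mu_1,\mu_2)=\|w_{\mu_1}-w_{\mu_2}\|_{L^2(D)}$ stated above, together with the bijectivity of $\mu\mapsto w_\mu$. Hence a curve $\gamma:[0,1]\to\M_0(D)$ is a constant-speed geodesic for $d_\gamma$ if and only if $T\circ\gamma$ is a constant-speed geodesic in $(X,\|\cdot\|_{L^2(D)})$, and $D(F)$ is carried bijectively onto $D(J)\sq X$ (since $F(\mu)<+\infty$ iff $J(w_\mu)<+\infty$).

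The key step, and the only point where the Hilbertian nature of $L^2(D)$ enters, is to describe these geodesics. Because $X$ is convex and the $L^2(D)$ norm is strictly convex (being Hilbertian), the constant-speed geodesics of $(X,\|\cdot\|_{L^2(D)})$ are precisely the affine segments $t\mapsto(1-t)w_0+tw_1$: such a segment lies in $X$ by convexity and manifestly has constant speed, while conversely any curve with $\|\gamma(s)-\gamma(t)\|_{L^2(D)}=|t-s|\,\|\gamma(0)-\gamma(1)\|_{L^2(D)}$ saturates the triangle inequality in a strictly convex space and is therefore forced to be affine. This yields a): transporting the defining inequality of $\lambda$-geodesic convexity of $F$ along such a segment gives, using $d_\gamma=\|\cdot\|_{L^2(D)}$ at the endpoints,
$$J\big((1-t)w_0+tw_1\big)\le(1-t)J(w_0)+tJ(w_1)-\frac{\lambda}{2}t(1-t)\|w_0-w_1\|_{L^2(D)}^2,$$
which is exactly the $\lambda$-convexity of $J$ on the convex set $X$. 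The forward implication uses the uniqueness of the geodesic (so that the geodesic furnished by the $\lambda$-geodesic convexity of $F$ must be the affine segment), while the converse uses that the affine segment is always an admissible competitor.

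For b) I would note that $T$ turns the auxiliary functional of \eqref{ca2} into its $J$-counterpart: with $\Phi(\epsi,v_*,v)=\frac{1}{2\epsi}d_\gamma(v,v_*)+F(v)$, the isometry yields $\Phi(\epsi,v_*,v)=\frac{1}{2\epsi}\|w_v-w_{v_*}\|_{L^2(D)}+J(w_v)$, i.e.\ the same expression built from $J$ and the $L^2(D)$ distance at the image points. Since \eqref{ca2} quantifies over triples $v_*,v_0,v_1\in D(F)$ and over connecting curves $\gamma$, and since the $(\epsi^{-1}+\lambda)$-convexity of $v\mapsto\Phi(\epsi,v_*,v)$ along $\gamma$ is expressed solely through $d_\gamma$, $F$, and the parametrization of $\gamma$ — all preserved by $T$ — this property holds for $F$ if and only if the corresponding property holds for $J$. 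Hence $F$ fulfills \eqref{ca2} exactly when $J$ does. The main obstacle is the geodesic-identification step above; once the geodesics of $X$ are pinned down as the affine segments, both equivalences are mere transcriptions across the isometry $T$.
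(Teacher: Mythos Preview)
Your proposal is correct and takes essentially the same approach as the paper: both exploit that $\mu\mapsto w_\mu$ is a bijective isometry onto the convex set $X\subset L^2(D)$, identify constant-speed geodesics there as affine segments (the paper deduces $w(t)=(1-t)w_0+tw_1$ from the constant-speed identity, which is precisely your strict-convexity argument), and then transcribe the defining inequalities across the isometry. The paper writes out the transported inequalities explicitly in each direction, whereas you phrase the same steps more abstractly as invariance of metric notions under isometry; the content is the same.
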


\begin{proof}

a). Let $F$ be $\lambda$-geodesically convex. Given  $w_0,w_1\in D(J)$ there exists a constant-speed geodesic $\mu:[0,1]\to\M_0(D)$ such that $w_{\mu(i)}=w_i$, $i=0,1$, and
\be\label{later}
F(\mu(t))\le(1-t)F(\mu(0))+tF(\mu(1))
-\frac{\lambda}{2}t(1-t)d_\g(\mu(0),\mu(1)).
\ee
By defining $w(t)=w_{\mu(t)}$, since
\begin{align*}
&\|w(s)-w(t)\|_{L^2(D)}=d_\g(\mu(s),\mu(t))\\
&\quad=(t-s)d_\g(\mu(0),\mu(1))=(t-s)\|w_0-w_1\|_{L^2(D)}
\qquad\forall0\le s\le t\le1,
\end{align*}
we readily have that $w(t)=(1-t)w_0+tw_1$. By using \eqref{later} we conclude for the $\lambda$-convexity of $J$  as
\begin{align}
J(w(t))=F(\mu(t))&\le(1-t)F(\mu(0))+t F(\mu(1))
-\frac{\lambda}{2}t(1-t)d_\gamma^2(\mu(0), \mu(1))\nonumber\\
&=(1-t)J(w_0)+tJ(w_1)
-\frac{\lambda}{2}t(1-t)\|w_0-w_1\|^2_{L^2(D)} .\label{before}
\end{align}

Assume now $J$ to be $\lambda$-convex, 
Fix $\mu_0,\mu_1\in D(F)$. By defining  $w_i = w_{\mu_i}$ for $i=0,1$ and $w(t)=(1-t)w_0+tw_1$ we get that $\mu(t)=\mu_{w(t)}$ is a constant-speed geodesic joining $\mu_0$ and $\mu_1$ as
\begin{align*}
&d_\g(\mu(s),\mu(t))=\|w(s)-w(t)\|_{L^2(D)}\\
&\quad=(t-s)\|w_0-w_1\|_{L^2(D)}=(t-s)d_\g(\mu_0,\mu_1)
\qquad\forall0\le s\le t\le1.
\end{align*}
Hence, by arguing exactly as in \eqref{before} the $\lambda$-geodesic convexity of $F$ follows. 

b). Let $F$ fulfill \eqref{ca2} and $w_*, \, w_0, \, w_1 \in D(J)$ be given. Define $\mu_* = \mu_{w_*}$, $\mu_0 = \mu_{w_0}$, and $\mu_1 = \mu_{w_1}$, and exploit \eqref{ca2} in order to find the curve $t\mapsto \mu(t)$ (possibly not a geodesic) joining $\mu_0$ and $\mu_1$ such that
\begin{align}
 \frac{1}{2\epsi}d_\g^2(\mu(t),\mu_*) + F(\mu(t)) &\leq  \frac{1-t}{2\epsi}d_\g^2(\mu(0),\mu_*) + (1-t)F(\mu_0)  +\frac{t}{2\epsi}d_\g^2(\mu_1,\mu_*) + tF(\mu_1) \nonumber\\
&- \frac{1 + \epsi \lambda}{2 \epsi} t(1-t) d_\g^2(\mu_0,\mu_1).\label{pop}
\end{align}
By letting $w(t)=w_{\mu(t)}$ we have that $\|w(t) - w_*\| = d_\g(\mu(t),\mu_*)$ and $\|w(t) - w(s)\| = d_\g(\mu(t),\mu(s))$ for all $s, t \in [0,T]$. Hence, relation \eqref{pop} entails that $J$ fulfills \eqref{ca2} as well.

On the contrary assume that $J$ fulfills \eqref{ca2} and that $\mu_*, \, \mu_0, \, \mu_1 \in D(F)$ are given. Define $w_* = w_{\mu_*}$  $w_0 = w_{\mu_0}$  $w_1 = w_{\mu_1}$ and let $t \mapsto w(t)$ be the curve whose existence is ensured by \eqref{ca2}. Then, by letting $\mu(t) = \mu_{w(t)}$ and arguing exactly as above we conclude that $F$ fulfills \eqref{ca2} as well. 
\end{proof}

Proposition \ref{lat} is based of the fact that the geometry of $\M_0$ and $L^2(D)$ coincide. Indeed, $\M_0$ is a non-positively curved metric space. As such $\lambda$-geodesic convexity in $\M_0$ implies the convexity property \eqref{ca2}. We shall give a direct proof of this fact in the following.

\begin{prop}[Geodesic convexity $\Rightarrow$ \eqref{ca2}]
 If $F:\M_0(D)\to]-\infty,+\infty]$ is $\lambda$-geodesically convex then it fulfills the convexity property \eqref{ca2}
\end{prop}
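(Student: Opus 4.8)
The plan is to transport the statement to the Hilbert space $L^2(D)$ by means of the isometry $\mu\mapsto w_\mu=\RR_\mu(1)$ recalled at the beginning of this section, which maps $(\M_0(D),d_\g)$ onto the convex set $X=\{w\in H^1_0(D):w\ge0,\ 1+\Delta w\ge0\}\sq L^2(D)$ and satisfies $d_\g(\mu_1,\mu_2)=\|w_{\mu_1}-w_{\mu_2}\|_{L^2(D)}$. Because $X$ is convex and carries the restriction of the $L^2(D)$ metric, the unique constant-speed geodesic joining two given points $\mu_0,\mu_1\in D(F)$ is the image $t\mapsto\mu(t):=\mu_{w(t)}$ of the affine segment $w(t)=(1-t)w_0+tw_1$, where $w_i:=w_{\mu_i}$ and $w(t)\in X$ by convexity. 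Fixing a third point $\mu_*\in D(F)$ and setting $w_*:=w_{\mu_*}$, the task reduces to checking the $(\epsi^{-1}+\lb)$-convexity of $v\mapsto\Phi(\epsi,\mu_*,v)$ along this very curve $\mu(\cdot)$, i.e.\ inequality \eqref{pop}, for every $0<\epsi<1/\lb^-$.

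The key step is the elementary Hilbert-space identity expressing the flatness (hence, in particular, the non-positive curvature) of $L^2(D)$. Expanding $w(t)-w_*=(1-t)(w_0-w_*)+t(w_1-w_*)$ one obtains by a direct computation the exact equality
\be\label{eq:npc}
\|w(t)-w_*\|_{L^2}^2=(1-t)\|w_0-w_*\|_{L^2}^2+t\|w_1-w_*\|_{L^2}^2-t(1-t)\|w_0-w_1\|_{L^2}^2,
\ee
which through the isometry becomes
$$d_\g^2(\mu(t),\mu_*)=(1-t)d_\g^2(\mu_0,\mu_*)+t\,d_\g^2(\mu_1,\mu_*)-t(1-t)d_\g^2(\mu_0,\mu_1).$$
In an arbitrary metric space the corresponding relation would hold only as an inequality ``$\le$'' and would encode the non-positive curvature assumption; here the Hilbertian structure upgrades it to an identity, which is all we need.

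Finally I would combine \eqref{eq:npc} with the $\lambda$-geodesic convexity of $F$. Since geodesics in $X$ are unique, the curve furnished by the definition of $\lambda$-geodesic convexity for the pair $\mu_0,\mu_1$ is exactly the segment $\mu(\cdot)$ above, so that $F(\mu(t))\le(1-t)F(\mu_0)+tF(\mu_1)-\frac{\lb}{2}t(1-t)d_\g^2(\mu_0,\mu_1)$. Multiplying the distance identity by $\frac{1}{2\epsi}$ and adding the two gives precisely \eqref{pop}, since $\frac{1}{2\epsi}+\frac{\lb}{2}=\frac{1+\epsi\lb}{2\epsi}$; as $\mu_*,\mu_0,\mu_1\in D(F)$ were arbitrary, property \eqref{ca2} follows. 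I expect the only genuine point requiring care to be the bookkeeping of the curvature corrections: it is precisely the $+\,t(1-t)d_\g^2(\mu_0,\mu_1)$ term supplied by \eqref{eq:npc} that raises the convexity modulus from $\lb$ to $\epsi^{-1}+\lb$, and the fact that one and the same geodesic realizes simultaneously the distance identity and the $F$-convexity inequality is what yields the stronger property \eqref{ca2} rather than merely \eqref{ca}. Notice that neither compactness of sublevels nor lower semicontinuity of $F$ enters this implication.
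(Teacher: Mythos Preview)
Your proof is correct and follows essentially the same route as the paper's: transport to $L^2(D)$ via the isometry $\mu\mapsto w_\mu$, take the affine segment $w(t)=(1-t)w_0+tw_1$, use the Hilbert-space parallelogram identity for $\|w(t)-w_*\|^2$, and add it (scaled by $\tfrac{1}{2\epsi}$) to the $\lambda$-convexity inequality for $F$ along this same curve. The only minor addition in your write-up is the explicit remark that uniqueness of geodesics in the convex subset $X\subset L^2(D)$ forces the curve supplied by the definition of $\lambda$-geodesic convexity to coincide with the segment; the paper leaves this implicit.
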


\begin{proof}
Let $\mu_*, \, \mu_0, \, \mu_1 \in D(F)$ be given and define $w_*=w_{\mu_*}$, $w_0=w_{\mu_0}$, $w_1=w_{\mu_1}$, $w(t)=(1-t)w_0 + t w_1$, and $\mu(t)=\mu_{w(t)}$. As  $\mu$ is a constant speed geodesic we have that 
\begin{align}
 \Phi(\epsi,\mu_*,\mu(t)) &= \frac{1}{2 \epsi}d_\g(\mu(t),\mu_*) + F(\mu(t)) = \frac{1}{2 \epsi}\|w(t) - w_*\|^2_{L^2(D)} + F(\mu(t))\nonumber\\
&\leq \frac{1-t}{2 \epsi}\|w_0 - w_*\|^2_{L^2(D)} + \frac{t}{2 \epsi}\|w_1 - w_*\|^2_{L^2(D)}  - \frac{t(1-t)}{2\epsi}\|w_0 - w_1\|^2_{L^2(D)} \nonumber\\
&+(1-t)F(\mu_0) + t F(\mu_1) - \frac{\lambda}{2}t(1-t)d_\g^2(\mu_0,\mu_1)\nonumber\\
&=(1-t)\Phi(\epsi,\mu_*,\mu_0)+ t \Phi(\epsi,\mu_*,\mu_1) - \frac{1 + \epsi \lambda}{2 \epsi}t (1-t) d_\g^2(\mu_0,\mu_1)\nonumber
\end{align}
whence the assertion follows.
\end{proof}

We shall now come to the existence results for evolution. Again, this can be formulated equivalently for trajectories of capacitary measures $t\mapsto\mu(t)\in\M_0(D)$ or of their function representatives   $t\mapsto w(t)\in X$. Let us start from measures. Theorem \ref{gen} yields the following.

\begin{theo}[Curves of maximal slope of capacitary measures]
Let $F:\M_0(D)\to]-\infty,+\infty]$ be proper, $d_\g$-lower semicontinuous and $\lambda$-geodesically convex. Then, for any given $\mu_0\in\overline{D(F)}$ there exists a unique $\mu =S(\mu_0)\in \MM(F,\tau(d_\g),\mu_0)$. Moreover, $\mu$ is locally Lipschitz curve of maximal slope for $F$ with respect to $\ls$, $u(t)\in D(|\partial F|)$ for all $t\in(0,T)$, and 
$$d_\g(S(\mu_0)(t),S(\nu_0)(t))\le d_\g(\mu_0,\nu_0)e^{-\lambda t}\quad\forall\mu_0,\nu_0\in\overline{D(F)}$$
\end{theo}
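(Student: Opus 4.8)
The plan is to realize the final theorem as a direct corollary of the abstract generation result Theorem~\ref{gen}, combined with the identification between the metric space of capacitary measures and the convex subset $X$ of $L^2(D)$ established right before Theorem~\ref{bba03}. The key structural fact is that $(\M_0(D),d_\g)$ is isometric, via $\mu\mapsto w_\mu=\RR_\mu(1)$, to $(X,\|\cdot\|_{L^2(D)})$, since $d_\g(\mu_1,\mu_2)=\|w_{\mu_1}-w_{\mu_2}\|_{L^2(D)}$. Thus everything reduces to verifying that the hypotheses of Theorem~\ref{gen} hold in the present setting, namely that $F$ fulfils the convexity assumption \eqref{ca2}, is $\tau(d_\g)$-lower semicontinuous, and is coercive.

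First I would invoke Proposition~\ref{lat}(a) together with the Proposition immediately following it (\emph{Geodesic convexity $\Rightarrow$ \eqref{ca2}}): since $F$ is assumed $\lambda$-geodesically convex, that Proposition yields that $F$ satisfies \eqref{ca2}. This is the crucial point and exploits that $\M_0(D)$ is non-positively curved (its geodesics correspond, under the isometry, to straight segments in the Hilbert space $L^2(D)$, so that $v\mapsto d_\g^2(v,\mu_*)$ is convex along geodesics). Next, $d_\g$-lower semicontinuity of $F$ is a direct hypothesis, and this is exactly $\tau(d_\g)$-lower semicontinuity. For coercivity I would note that $D\subset\R^d$ is bounded, hence $X$ is a bounded subset of $L^2(D)$ (indeed $\|w_\mu\|_{L^2(D)}\le\|w_\emptyset\|_{L^2(D)}$ by the maximum principle, where $w_\emptyset=\RR_\emptyset(1)$ solves $-\Delta w=1$ on $D$); since $F$ is proper with $D(F)\subset\M_0(D)$ and $(\M_0(D),d_\g)$ is a compact metric space, the term $\tfrac{1}{2\epsi^*}d_\g^2(\mu_*,\cdot)$ is bounded and $\inf\Phi(\epsi^*,\mu_*,\cdot)>-\infty$ follows from lower semicontinuity on a compact space.

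With these three hypotheses in hand, Theorem~\ref{gen} applies verbatim and delivers: the existence of a unique $\mu=S(\mu_0)\in\MM(F,\tau(d_\g),\mu_0)$ for each $\mu_0\in\overline{D(F)}$; that $\mu$ is a locally Lipschitz curve of maximal slope for $F$ with respect to its strong upper gradient $\ls$; that $\mu(t)\in D(\ls)$ for all $t\in(0,T)$; and the $\lambda$-contraction estimate $d_\g(S(\mu_0)(t),S(\nu_0)(t))\le e^{-\lambda t}d_\g(\mu_0,\nu_0)$. I would transcribe these conclusions directly, since they are precisely the statement of the theorem being proved.

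The main obstacle, or rather the point requiring genuine care rather than routine transcription, is confirming that the abstract \eqref{ca2} is the relevant hypothesis and that geodesic convexity suffices to imply it in $\M_0(D)$; but this has already been isolated and proved as the \emph{Geodesic convexity $\Rightarrow$ \eqref{ca2}} Proposition above, so the present proof is essentially a bookkeeping assembly of previously established facts. The only residual subtlety I would want to state explicitly is why $v\mapsto d_\g^2(v,\mu_*)$ is convex along the relevant curves — this is inherited from the Hilbert geometry of $L^2(D)$ through the isometry, exactly the non-positive-curvature remark made after \eqref{ca2}. Given that the heavy lifting is done upstream, I do not expect a hard technical core here, only the need to cite the right results in the right order.
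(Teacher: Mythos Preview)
Your proposal is correct and matches the paper's approach exactly: the paper simply states ``Theorem~\ref{gen} yields the following'' immediately after the Proposition establishing that geodesic convexity implies \eqref{ca2}, so the theorem is presented as a direct corollary with no further argument. You have merely made explicit the verification of the hypotheses (lower semicontinuity, coercivity via compactness of $\M_0(D)$, and \eqref{ca2}) that the paper leaves implicit.
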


As for the  function  representatives $w$, the situation is that of classical gradient flows in Hilbert spaces.

\begin{theo}[Gradient flow of subharmonic representatives]
Let $J:L^2(D)\to]-\infty,+\infty]$ with $D(J)\subset X$ be proper, lower semicontinuous and $\lambda$-convex. Then, for any given $w_0\in\overline{D(J)}$ there exists a unique $w=S(w_0)\in\MM(F,\tau(d_{L^2(D)}),w_0)$. Moreover, $w$ is the unique solution of gradient flow of the functional $J$ in $L^2(D)$. In particular,
\begin{align}
w'+\partial J(w)\ni0\quad\text{a.e. in }(0,T),\quad w(0)=w_0.\label{eqw}
\end{align}
Eventually, we have that
$$\|S(w_0)(t)-S(v_0)(t)\|\le e^{-\lambda t}\|w_0-v_0\|\quad\forall w_0,v_0\in\overline{D(J)}.$$
\end{theo}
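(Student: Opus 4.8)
The final statement (Theorem on gradient flow of subharmonic representatives) asserts that, for $J:L^2(D)\to]-\infty,+\infty]$ proper, lower semicontinuous and $\lambda$-convex with $D(J)\subset X$, there is a unique element of the minimizing movement class coinciding with the classical gradient flow of $J$ in $L^2(D)$, satisfying the differential inclusion $w'+\partial J(w)\ni0$ a.e. with $w(0)=w_0$, together with the $\lambda$-contraction estimate. Let me think about what is really being claimed and how to get it.

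**The approach.** The plan is to reduce the statement entirely to the classical Brézis–Kōmura theory of gradient flows for $\lambda$-convex, lower semicontinuous, proper functionals on a Hilbert space. Here the Hilbert space is $L^2(D)$, which is certainly complete with its strong topology, and the metric $d_{L^2(D)}$ is induced by the Hilbert norm. The decisive observation is that the abstract metric machinery of Section \ref{stool} specializes in the Hilbertian setting to the familiar subdifferential flow: by the discussion around \eqref{h}, curves of maximal slope in a Hilbert space coincide with gradient flows, and by the Proposition relating curves of maximal slope and gradient flows, the differential-inclusion form \eqref{eqw} is exactly the maximal-slope condition once $\partial J$ is weakly closed. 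So the heart of the proof is to check that the hypotheses of Theorem \ref{gen} are met, with $X=L^2(D)$, $\tau(d)$ the strong $L^2$-topology, and the upper gradient being the local slope $\ls$.

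**Key steps, in order.** First I would verify the convexity hypothesis \eqref{ca2}. Since $L^2(D)$ is a Hilbert space, it is a non-positively curved (flat) metric space, so $v\mapsto d^2(v,v_*)=\|v-v_*\|^2_{L^2(D)}$ is convex (indeed $2$-convex) along straight-line segments for every $v_*$. Combined with the assumed $\lambda$-convexity of $J$ and the remark in the excerpt that ``\eqref{ca2} ensues if, in addition to the $\lambda$-geodesic convexity of $F$, one requires $v\mapsto d^2(v,v_*)$ to be convex,'' this gives \eqref{ca2} immediately, taking the curve $\gamma$ to be the segment $t\mapsto(1-t)v_0+tv_1$. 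Second, I would record that $J$ is $\tau(d_{L^2})$-lower semicontinuous by hypothesis and coercive: $\lambda$-convexity plus lower semicontinuity and properness of $J$ forces a lower bound of the form $J(v)\ge a-b\|v\|^2$, so that $\inf\Phi(\epsi^*,u^*,\cdot)>-\infty$ for $\epsi^*$ small enough (one picks $\epsi^*<1/\lambda^-$ and any $u^*\in D(J)$). Third, Theorem \ref{gen} then applies verbatim: it yields, for every $w_0\in\overline{D(J)}$, a unique $w=S(w_0)\in\MM(J,\tau(d_{L^2}),w_0)$ which is a locally Lipschitz curve of maximal slope for $J$ with respect to $\ls$, satisfies $w(t)\in D(\ls)$ for $t\in(0,T)$, and for which $S$ is a $\lambda$-contraction semigroup — this last item is precisely the contraction estimate $\|S(w_0)(t)-S(v_0)(t)\|\le e^{-\lambda t}\|w_0-v_0\|$ demanded in the conclusion. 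Fourth, to pass from ``curve of maximal slope with respect to $\ls$'' to the differential inclusion \eqref{eqw}, I would invoke the identification, valid in Hilbert spaces, that the local slope equals the norm of the minimal-norm element of the subdifferential, $\ls(w)=\|\partial^\circ J(w)\|$, for $\lambda$-convex lower semicontinuous $J$ (this is the content of the Proposition ``Curves of maximal slope $=$ gradient flows''); the weak closedness of $\partial J(w)$ needed there is automatic for convex (hence $\lambda$-convex after the standard shift $J+\tfrac{\lambda^-}{2}\|\cdot\|^2$) lower semicontinuous functionals, since the subdifferential of such a functional is a maximal monotone operator with weakly-strongly closed graph. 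This converts the maximal-slope/energy identity into $w'+\partial^\circ J(w)\ni0$, equivalently $w'+\partial J(w)\ni0$ a.e. in $(0,T)$, with the initial condition $w(0)=w_0$ coming from the initialization of the implicit Euler scheme \eqref{eimplw}.

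**Main obstacle.** The genuinely delicate point is the passage from the abstract metric-space conclusion of Theorem \ref{gen} (a curve of maximal slope with respect to the local slope $\ls$) to the concrete differential inclusion \eqref{eqw}; everything else is verification of hypotheses. The bridge requires knowing that in the Hilbert setting the local slope is not merely \emph{an} upper gradient but equals $\|\partial^\circ J(w)\|$ and that the energy identity \eqref{abstra-enide} together with the chain rule for convex functions forces $-w'(t)$ to lie in $\partial J(w(t))$ for a.e.\ $t$ — not just that its norm matches. I would handle this by the standard argument: from $|w'|^2=g^2\circ w=-(J\circ w)'$ one has, for the absolutely continuous curve $w$, that $(J\circ w)'(t)=\langle v,w'(t)\rangle$ for every $v\in\partial J(w(t))$ by the subdifferential inequality and the chain rule; choosing $v=\partial^\circ J(w(t))$ and comparing with the energy identity yields $\|w'(t)+\partial^\circ J(w(t))\|^2=0$, so $w'(t)=-\partial^\circ J(w(t))$. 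This is precisely the computation displayed in \eqref{h}, now carried out pointwise in $t$ rather than formally. Uniqueness of $w$ in the whole minimizing-movement class $\MM$ (as opposed to the generalized class $\GMM$) is guaranteed by Theorem \ref{gen} under \eqref{ca2}, so no separate uniqueness argument is needed. I would therefore expect the write-up to be short, consisting mainly of the sentence ``\eqref{ca2} holds because $L^2(D)$ is flat and $J$ is $\lambda$-convex, so Theorem \ref{gen} applies; the maximal-slope condition is the gradient flow by the Hilbertian identification $\ls=\|\partial^\circ J\|$.''
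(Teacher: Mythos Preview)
Your proposal is correct and follows precisely the route the paper intends: the paper does not give a separate proof but simply remarks, just before stating the theorem, that ``the situation is that of classical gradient flows in Hilbert spaces,'' treating the result as a direct instance of Theorem~\ref{gen} together with the Hilbertian identification of curves of maximal slope with subdifferential flows (the computation \eqref{h} and the Proposition ``Curves of maximal slope $=$ gradient flows''). Your verification of \eqref{ca2} via the flatness of $L^2(D)$, of coercivity from $\lambda$-convexity, and your passage from the maximal-slope condition to the differential inclusion are exactly the details behind that one-line reduction.
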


Note that, in case $J$ is defined as in \eqref{one} via a smooth $j$, the inclusion in \eqref{eqw} reads
$$w'+\partial_w j(x,w)+\partial I_X(w)\ni0$$
which is equivalent to $w(t)\in X$ and
$$\int_D\big(w'(t)+\partial_w j(\cdot,w(t))\big)(w(t)-\tilde w)\,dx
\le0\qquad\text{a.e. in }(0,T),\ \forall\tilde w\in X.$$

More generally, in case $J=M+I_X$ where $M:L^2(D)\to]-\infty,+\infty]$ is a proper, convex, and lower semicontinuous functional with $\text{\rm int}\,D(M)\cap X\neq\emptyset$, the inclusion in \eqref{eqw} means $w(t)\in X\cap D(M)$ and
$$\int_D w'(t)(w(t)-\tilde w)\,dx\le M(\tilde w)-M(w(t))\qquad\text{a.e. in }(0,T),\ \forall\tilde w\in X\cap D(M).$$

\section{Variational flows of shapes: spectral optimization problems}\label{sspec}
\subsection{General shape evolution}

In this section, we deal with flows of shapes. There are no ``standard'' distances on the class $\A(D)$ of quasi-open subsets of $D$, and several choices will be studied in the sequel. A first natural distance is given by the Lebesgue measure of the symmetric difference set
$$d_{char}(\Om_1,\Om_2)=|\Om_1\symd\Om_2|.$$
Since two quasi-open sets may differ for a negligible set (think for instance in $\R^2$ to a disk and a disk minus a segment), this is not a proper metric in $\A(D)$, so that one should consider equivalence classes in the family of shapes.

For this purpose, for every measurable set $M\sq D$, we define the Sobolev space
$$\tilde H^1_0(M):=\big\{u\in H^1_0(D),\ u =0\hbox{ a.e. on }D\sm M\big\}.$$
We notice that, for a given open set $\Om$, the space $\tilde H^1_0(\Om)$ may not coincide with the usual Sobolev space $H^1_0(\Om)$, the latter being possibly smaller, as soon as $\Om$ is non smooth. Nevertheless, for every measurable set $M\sq D$ there exists a unique (up to a zero capacity set) quasi-open set $\Om(M)\in\A(D)$ such that
$$\tilde H^1_0(M)=H^1_0\big(\Om(M)\big).$$
If $M_1\sq M_2$ then $\Om(M_1)\sq\Om(M_2)$. In particular, $\Om(M)\sq M$. Consequently, one can define the resolvent of the Laplace operator with Dirichlet boundary conditions associated to $M$ by setting
$$\RR_M:L^2(D)\to L^2(D),\qquad\RR_M:=\RR_{\Om(M)},$$
and one can extend the $\g$ distance and the $\wg$-convergences to measurable sets, by setting
$$d_\g(M_1,M_2)=d_\g\big(\Om(M_1),\Om(M_2)\big),\qquad M_n\sr{\wg}{\lra}M\mbox{ if }\Om(M_n)\sr{\wg}{\lra}\Om(M).$$

Working with measurable sets instead of quasi-open sets in shape optimization problems associated to Sobolev spaces may, in general, severely change the result. Nevertheless, as soon as the functional $F$ satisfies some monotonicity assumption, the problems become, in a certain sense, equivalent. We refer the reader to \cite{bbh09} for more details between this equivalence.

So let us denote
$$M(D)=\{M\sq D\ :\ M\mbox{ measurable}\}.$$
Let $F:\A(D)\to]-\infty,+\infty]$ be a $\wg$-lower semicontinuous functional, monotone decreasing for set inclusion and consider its extension to a functional on measurable sets given by $\haz F:M(D)\to]-\infty,+\infty]$ 
$$\haz F(M)=F(\Omega(M)).$$ 
Functionals of the form
$$F(\Omega)=\Phi\big(\lambda_1(\Omega),\dots ,\lambda_k(\Omega)\big),$$
where $\Phi:\R^k\to]-\infty,+\infty]$ is increasing in each variable and lower semicontinuous are admissible, since for every $k\in\N$ the $k$-th eigenvalue of the Dirichlet Laplacian $\lb_k(\Omega)$ is decreasing with respect to the inclusion of measurable sets. As a consequence, $\haz F$ is monotone decreasing with respect to set inclusion as well.

In $M(D)$ the distance $d_{char}$ is not compact. Nevertheless, relying on the monotonicity assumption on $\haz F$, we have the following result. 

\begin{theo}[Generalized minimizing movements of shapes]\label{bba07}
Let $F:\A(D)\to]-\infty,+\infty]$ be $\wg$-lower semicontinuous, monotone decreasing for set inclusion, and let $M_0\in M(D)$ such that $\Omega(M_0)\in D(F)$. Then, the set $\GMM(\haz F,\tau_{w\gamma},M_0)$ is non-empty. Moreover, every $M\in\GMM(\haz F,\tau_{w\gamma},M_0)$ is increasing in the sense of set inclusion.
\end{theo}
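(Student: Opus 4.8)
The plan is to run the implicit Euler scheme of Definition~\ref{dmovem} for $\haz F$ on $M(D)$, using $d_{char}$ as dissipation distance and $\tau_{w\gamma}$ as convergence topology, and to mimic the proof of Theorem~\ref{bba03}, letting the union of two sets play the role of the pointwise maximum of two subharmonic functions. Fixing $\eps>0$ and $M_\eps^0=M_0$, I would consider at each step
$$M_\eps^{n+1}\in\argmin\Big\{\haz F(M)+\frac{1}{2\eps}|M\symd M_\eps^n|^2\ :\ M\in M(D)\Big\}.$$
The decisive observation is that enlarging any competitor $M$ to $M\cup M_\eps^n$ cannot increase the cost: monotonicity of $\haz F$ gives $\haz F(M\cup M_\eps^n)\le\haz F(M)$, while $|(M\cup M_\eps^n)\symd M_\eps^n|=|M\sm M_\eps^n|\le|M\symd M_\eps^n|$, the inequality being strict unless $|M_\eps^n\sm M|=0$. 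Hence every minimizer contains $M_\eps^n$ up to a Lebesgue-null set, and the infimum is unchanged if the minimization is restricted to the cone $\{M\sq D:\ M\supseteq M_\eps^n\}$.

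On this cone the direct method applies. Identifying $M$ with its quasi-open representative $\Om(M)$, the class $\{\Om\in\A(D):\ \Om\supseteq\Om(M_\eps^n)\}$ is $\wg$-closed: indeed $\Om_k\supseteq\Om(M_\eps^n)$ forces $\RR_{\Om_k}(1)\ge\RR_{\Om(M_\eps^n)}(1)$ by monotonicity of the resolvent with respect to inclusion, and this inequality, preserved under weak $H^1_0(D)$ limits, ensures that the $\wg$-limit still contains $\Om(M_\eps^n)$. It is therefore $\wg$-compact, being a closed subset of the $\wg$-compact space $\A(D)$. On it $\haz F=F\circ\Om(\cdot)$ is $\wg$-lower semicontinuous because $F$ is, while the dissipation reduces to the volume increment $|\Om|-|\Om(M_\eps^n)|$, which is $\wg$-lower semicontinuous by the lower semicontinuity of $\Om\mapsto\int_\Om1\,dx$ recalled in Section~\ref{ssgamma}. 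This produces the increments $M_\eps^{n+1}$ and, by induction, an increasing discrete flow $M_0=M_\eps^0\sq M_\eps^1\sq M_\eps^2\sq\cdots$.

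To extract a generalized minimizing movement I would let $\eps\to0$. Testing the minimality of $M_\eps^{n+1}$ against $M_\eps^n$ yields the telescoping estimate $\frac{1}{2\eps}\sum_n|M_\eps^{n+1}\symd M_\eps^n|^2\le\haz F(M_0)-\inf\haz F$, where $\inf\haz F=F(D)>-\infty$ again by monotonicity; this controls the discrete trajectories $u_\eps(t)=M_\eps^{[t/\eps]}$. Since each $u_\eps(t)$ ranges in the $\wg$-compact space $\A(D)$ and $t\mapsto u_\eps(t)$ is increasing, a diagonal extraction over a countable dense set of times combined with a Helly-type selection for the bounded increasing families $\RR_{u_\eps(t)}(1)$ delivers a subsequence $\eps_j\to0$ with $u_{\eps_j}(t)\sr{\wg}{\lra}M(t)$ for every $t$. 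Thus $M\in\GMM(\haz F,\tau_{w\gamma},M_0)$ and the set is non-empty.

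The monotonicity of the limit is inherited from the discrete level. For $t_1<t_2$ one has $[t_1/\eps]\le[t_2/\eps]$, hence $u_\eps(t_1)\sq u_\eps(t_2)$ and $\RR_{u_\eps(t_1)}(1)\le\RR_{u_\eps(t_2)}(1)$ q.e.; passing to the weak $H^1_0(D)$ limit along the selected subsequence, these inequalities persist, so the limiting functions $w^{(i)}$ with $M(t_i)=\{w^{(i)}>0\}$ satisfy $0\le w^{(1)}\le w^{(2)}$, whence $M(t_1)\sq M(t_2)$. The same computation applies to any element of $\GMM(\haz F,\tau_{w\gamma},M_0)$, each being a $\wg$-limit of increasing discrete flows. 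I expect the real difficulty to lie in the existence step rather than in the monotonicity: since $d_{char}$ is neither $\wg$-compact nor, in general, $\wg$-lower semicontinuous, the abstract Theorem~\ref{texmm} cannot be invoked verbatim, and it is exactly the monotonicity that repairs this, confining the increments to the cone on which the dissipation becomes the $\wg$-lower semicontinuous volume functional and supplying the time-monotone structure for the Helly selection. The remaining care concerns the systematic distinction between a measurable set $M$ and its quasi-open representative $\Om(M)$, together with the attending a.e.-versus-q.e.\ subtleties, which one must verify to be compatible with the union and inclusion operations used above.
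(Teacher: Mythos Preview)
Your strategy matches the paper's: use monotonicity of $\haz F$ to confine each incremental step to competitors containing the previous one, exploit $w\gamma$-compactness of $\A(D)$ together with the $w\gamma$-lower semicontinuity of the measure to solve the discrete scheme, and then pass to the limit via a Helly-type argument on the resulting increasing families. The paper's limit passage is slightly more explicit---it applies the classical scalar Helly theorem to the non-decreasing functions $\delta_\eps(t)=|\omega_\eps(t)\symd\omega_0|$ (following \cite{MM05}) and uses the continuity points of the limit $\delta$ to upgrade $w\gamma$-convergence from a countable dense set to every $t$---but the underlying ideas are the same.
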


\begin{proof}
We prove first that every solution of the incremental Euler scheme
\be\label{bba08}
M_\vps^{n+1}\in\dss\argmin_{M\in M(D)}\Big\{\haz F(M)+\frac{1}{2\vps}|M_\vps^n\symd M|^2\Big\}.
\ee
is increasing in the sense of inclusions. As a consequence of the monotonicity of $\haz F$, we have that
$$\haz F(M\cup M_\vps^n)+\frac{1}{2\vps}|M_\vps^n\symd(M\cup M_\vps^n)|^2\le \haz F(M)+\frac{1}{2\vps}|M_\vps^n\symd M|^2,$$
the inequality being strict as soon as $|M_\vps^n\sm M|$ is not zero. Consequently, every solution $M$ of the incremental problem satisfies, if it exists, $M_\vps^n\sq M$.
 
Let $\om_k=\Omega(M_k)\sq M_k$ where $(M_k)_k$ is a minimizing sequence of measurable sets, each one containing $M_\vps^n$. By the compactness of the $w\gamma$-convergence, up to extracting a subsequence we have that $\om_k\sr{\wg}{\to}\om$.
As $F(\om)\le\liminf_\kif F(\om_k)$, the measurable set $\om\cup M_\vps^n$ is a solution to the incremental problem \eqref{bba08}, since
\begin{align}
&\haz F(\omega\cup M_k)+\frac{1}{2\eps}|(\omega\cup M_k)\symd M^n_\eps|^2
\le\haz F(\omega)+\frac{1}{2\eps}|(\omega\cup M_k)\symd M^n_\eps|^2\nonumber\\
&=F(\omega)+\frac{1}{2\eps}|(\omega\cup M_k)\symd M^n_\eps|^2\le\liminf_{k\to+\infty}\left(F(\omega_k)+\frac{1}{2\eps}|(\omega_k\cup M_k)\symd M^n_\eps|^2\right)\nonumber\\
&\le\liminf_{k\to+\infty}\left(\haz F(M_k)+\frac{1}{2\eps}|M_k\symd M^n_\eps|^2\right).\nonumber
\end{align}
Finally, we set $M_\vps^{n+1}:=\om\cup M_\vps^n$.

We rewrite the discrete flows in terms of quasi-opens piecewise constant sets $t\mapsto \omega_\epsi(t)$ and we pass to the limit as $\vps\to0$. We reproduce in this setting the argument of \cite[Thm. 3.2]{MM05}. In particular, by using the monotonicity of the flows we have that the functions $\delta_\epsi(t)= |\omega_\epsi(t) \symd \omega_0|$ are non-decreasing. Hence, by the classical Helly principle, at least for some not relabeled subsequence we have that $\delta_\epsi(t) \to \delta(t)$ for all $t \in [0,+\infty)$ and some non-decreasing function $\delta$. The function $\delta$ is continuous with the exception of at most a countable set of points $N$. We shall introduce the countable set $M\subset [0,+\infty)$ in such a way that 
$$0 \in M, \quad M \ \ \text{is dense in}  \ \ [0,+\infty), \quad N \subset M.$$
By a diagonal estraction argument and the compactness of the $\wg$-topology (still not relabeling) one can find that $\omega_\epsi(t) \sr{\wg}{\to}\om(t)$ for all $t \in M$. Let us now fix $t \in  [0,+\infty)\setminus M$, let $t_n \in M$ such that $t_n \nearrow t$, and define $\omega(t) = \cup_{n}\omega(t_n)$. We shall show that indeed $\omega(t)$ coincides with the $\wg$-limit of $\omega_\epsi(t)$. To this aim we exploit again the compactness of the $\wg$-topology, and extract a further (possibly $t$ dependent) subsequence $\epsi^t_n$ in such a way that $\omega_{\epsi^t_n}(t) \sr{\wg}{\to} \omega_*$. By using the lower semicontinuity of the Lebesgue measure with respect to the $\wg$-topology \cite[Prop. 5.6.3, p. 125]{bubu05} We have that 
\begin{align}
 |\omega(t)\symd \omega_*| &= \lim_{n\to + \infty}|\omega(t_n) \symd \omega_*| \leq \lim_{n\to + \infty} \liminf_{k\to + \infty} |\omega_{\epsi^t_k}(t_n)\symd \omega_{\epsi^t_k}(t)| \nonumber\\
&= \lim_{n\to + \infty} \liminf_{k\to + \infty} \big(\delta_{\epsi^t_k}(t) -  \delta_{\epsi^t_k}(t_n)\big) =  \lim_{n\to + \infty} \big(\delta(t) -  \delta(t_n)\big) \sr{t \not \in M}{=}0.\nonumber
\end{align}
Hence $\omega(t) \equiv \omega_*$. In particular, the whole sequence $\omega_\epsi (t)$ $\wg$-converges to $\omega(t)$ even for $t \not \in M$. Finally, we have proved that $ t\mapsto\om(t)$ belongs to $\GMM(F,\tau_{w\gamma},\Omega(M_0))$. Correspondingly, $M_\eps$ has a pointwise limit $M$. In particular, $M$ belongs to $\GMM(\haz F,\tau_{w\gamma},M_0)$.
\end{proof}

\begin{exam}
{\bf Evolution of a ball.} Let $\Om_0=B(0,R_0)$. For every $\vps >0$, the discrete movement associated to $\lb_1$ consists on balls. This is a consequence of the Schwartz rearrangement procedure. Consequently, the minimizing movement consists of an increasing evolution of concentric balls. 

For every given $R>0$ consider the ball $B(0,r)$, with $r>R$, which minimizes
$$\lambda_1(B(0,r))+\frac{\omega_d^2(r^d-R^d)^2}{2\eps}=r^{-2}\lambda_1(B(0,1))+\frac{\omega_d^2(r^d-R^d)^2}{2\eps},$$
where $\omega_d$ denotes the Lebesgue measure of the unit ball in $\R^d$. We obtain
$$-\frac{2}{r^3}\lambda_1(B(0,1))+\frac{\omega_d^2}{\eps}(r^d-R^d)dr^{d-1}=0$$
which gives, for $\eps$ small,
$$r\approx R+\frac{2\lambda_1(B(0,1))}{d^2\omega_d^2R^{2d+1}}\eps.$$
The radius $R(t)$ during the evolution then satisfies the differential equation
$$R'(t)=\frac{2\lambda_1(B(0,1))}{d^2\omega_d^2 R^{2d+1}}$$
which has the solution
$$R(t)=\left(R_0^{2d+2}+\frac{4(d+1)\lambda_1(B(0,1))}{d^2\omega_d^2}t\right)^{1/(2d+2)}.$$
\end{exam}

\begin{rema} {\bf $F$ might be discontinuous.}
The functional $F$ may be discontinuous on the curve $t\mapsto\om(t)$. Indeed, let us consider $F(\Om)=\lb_1(\Om)$ and a generalized minimizing movement starting from $\Om_0=B(0,1)\cup A(R_1,R_2)$ where $A(R_1,R_2)$ stands for the annulus centered at $0$ of radii $1<R_1<R_2$. We choose $R_1$ and $R_2$ in such a way that $\lb_1(A(R_1, R_2))> B(0,1)$. Hence, as the connected component $A(R_1, R_2)$ of $\Omega_0$ does not contribute to the value $F(\Omega_0)$ of the functional,  the intuition hint is that   the generalized minimizing movement from $\Omega_0$ will be $\Omega(t)= A(R_1, R_2) \cup B(0,f(t))$ with $f$ increasing and discontinuous at $f=R_1$.

\end{rema}

An interesting question is whether the evolution is stable in some particular classes of shapes. In particular, the interest in stability is related to compactness. In two dimensions of space, the class of simply connected open sets is compact with respect to $\g$-convergence. Moreover, in any dimension of the space, the class of convex sets is also compact with respect to $\g$-convergence. In the general case, we shall however remark that stability is not to be expected, as we argue below.

\begin{rema} {\bf Topological genus is not conserved.} Let $\Om_0 = A(R_1,R_2)\setminus C$ where $C$ is a radial {\it cut}. As $\lb_1(\Om_0) > \lb_1(A(R_1,R_2))$, any generalized minimizing movement starting from $\Om_0$ will immediately fill-in the cut so that the simply connected $\Om_0$ gets to be non-simply connected.

An example of an evolution from two simply-connected components to one non-simply-connected component is that starting from $\Om_0=B(0,1)\cup U$, where $U$ is a suitable set, disjoint from $B(0,1)$. Let $s=\inf\{r>1\ :\ B(0,r) \cap  U\ne\emptyset\}$ and assume that $\lb_1(U)>\lb_1(B(0,s))$. Since $U$ does not contribute to $\lb_1(B(0,r)\cup U)$ up to $r=s$,   intuitively   any generalized minimizing movement starting from $\Om_0$ is of the form $B(0,f(t))\cup U$ with $f$ increasing up to some intersection time. For suitably chosen sets $U$, after the intersection time the new set will not be simply connected.

Note nonetheless that, due to the monotonicity of the flow, the number of connected components is non-increasing during the evolution.
\end{rema}


\begin{rema}{\bf Convex shapes are unstable.}
Assume $\Omega_0$ to be the square $[0,\pi]^2\subset \R^2$. Consider the mapping $T_t : x \in \R^2 \mapsto x + t v(x) n $ where $v$ is suitably smooth and $n$ is the outward unit normal to $\partial \Omega_0$ and consider $\Omega_t:= T_t (\Omega_0)$.  For the sake of definiteness, we shall normalize $\int_{\partial \Omega_0} |v|ds =1$. We have that \cite[Thm. 5.7.1, p. 209]{hepi05} 
$$\frac{d}{dt} \lb_1(\Omega_t) = - \int_{\partial \Omega_0} \left| \frac{\partial u_1}{\partial n}\right|^2 v \, ds$$
where $u_1$ is the first eigenfunction (with unit $L^2$ norm) of the Dirichlet Laplacian on $\Omega_0$. Hence, by fixing a time step $\epsi>0$ we can readily compute that the minimum of 
$$ t \mapsto G_v(t):=F(\Omega_t) +\frac{1}{2\epsi}|\Omega_t \symd \Omega_0|^2= {}- t \int_{\partial \Omega_0} \left| \frac{\partial u_1}{\partial n}\right|^2 v \,ds + \lambda_1(\Omega_0) + \frac{t^2}{2\epsi} \left(\int_{\partial \Omega_0}|v|ds\right)^2 $$
is attained at 
$$ t_v = \epsi \int_{\partial \Omega_0} \left| \frac{\partial u_1}{\partial n}\right|^2 v\, ds $$
and corresponds to the value
$$G_v(t_v)= -\frac{\epsi}{2}\left(\int_{\partial \Omega_0} \left| \frac{\partial u_1}{\partial n}\right|^2 v \, ds\right)^2  + \lambda_1(\Omega_0). $$
In particular, as we readily compute that  $u_1(x,y)=\sin x \sin y$, the latter entails that in order to minimize $v\mapsto G_v(t_v)$ one would rather have $v \geq 0$ and  concentrate the mass of $v$ in the middle of the sides of the square. Hence, we conclude that bumps are likely to develop from midpoints of the sides of the square so that convexity will be lost.
\end{rema}

We can formulate some open questions:
\begin{itemize}
\item Assume $\Om_0$ is convex. Under which conditions on $\Omega_0$ and $m$ the minimizers of
\be\label{minxx}
\min\big\{\lambda_1(\Omega)\ :\ \Om_0\sq\Om,\ |\Omega|=m\big\}
\ee
are convex? According to the  intuitive   argument above, if $\Omega_0$ is a square and $m$ is slightly larger than $|\Omega_0|$, then the optimal domains should not be convex. In \cite{bbv11} it is proved that for a ``thin'' rectangle $\Omega_0$ of sizes $\eps$ and 1, and $m<\pi/4$, the solution of the shape optimization problem eqref{minxx} cannot be convex, provided $\eps$ is small enough.

\item Let $\Omega_0$ be a convex set and assume that for every $m\ge|\Omega_0|$ every solution of the shape optimization problem \eqref{minxx} is convex. Is it true that then $\Omega_0$ is a ball?

\item Is it true that the generalized minimizing movement associated to $\lb_1$ in the framework of Theorem \ref{bba07} will converge to a ball (rescaling if necessary)?
\item Prove or disprove that the metric derivative of $\lb_1$ computed at a bounded smooth set $\Om$ is given by
$$|\lb_1'|(\Omega) = \max_{\partial \Omega} \left| \frac{\partial u_1}{\partial n}\right|^2.$$
Precisely, prove that
$$\limsup_{d_{char}(\Om_n,\Om)\to0,\ \Om\sq\Om_n}\frac{\lb_1(\Om)-\lb_1(\Om_n)}{|\Om_n\sm\Om|}\le\max_{\partial\Om_0}\Big|\frac{\partial u_1}{\partial n}\Big|^2.$$
\end{itemize}

\noindent{\bf Constraint on the measure.} An alternative evolution, which does not require any rescaling, is to work in the class of sets with prescribed measure. Let $c>0$. We consider only measurable sets $M\in M(D)$ such that $|M|=c$. The incremental problem is given by:
\be\label{bba10}
M_\vps^{n+1}\in\dss\argmin_{M\in M(D),|M|=c}\Big\{\haz F(M)+\frac{1}{2\vps}|M_\vps^n\symd M|^2\Big\},
\ee
No monotonicity can occur in this case, unless the flow is constant. The existence of a generalized minimizing movement associated to the incremental step \eqref{bba10} is not clear. Nevertheless, one can construct discrete solutions of the incremental scheme. 

Indeed, let $(M_k)_k$ be a minimizing sequence in \eqref{bba10}. We associate the quasi open sets $\om_k=\Omega(M_k)$ and, up to a subsequence, we can assume that $\om_n\sr{\wg}{\lra}\om$. If $|\om|=c$, then $1_{M_k}$ converges in $L^1(D)$ to $1_\om$, so that $\om$ is a minimizer.

If $|\om|<c$, we replace $\om$ with $\om\cup U$, where $U$ is chosen such that $|\om\cup U|=c$ and
$$|M_\vps^n\symd M_k|\to|M_\vps^n\Delta(\om\cup U)|.$$
Consequently, $\om\cup U$ is a solution to the incremental step \eqref{bba10}.

An alternative way is to replace the measure constraint by adding a penalized term in the functional, i.e. to replace $\haz F(M)$ by $\haz F(M)+|M|$. In this case, the existence of a solution to the incremental step relies on the lower semicontinuity of the Lebesgue measure for the $\wg$-convergence.

\medskip
\noindent {\bf Perimeter penalization.} One can alternatively introduce a penalization on the perimeter. In this case, the incremental step reads
\be\label{bba11}
M_\vps^{n+1}\in\dss\argmin_{M\in M(D)}\Big\{\haz F(M)+P_D(M)+\frac{1}{2\vps}|M_\vps^n\symd M|^2\Big\},
\ee
The topology given by $d_{char}$ turns out to be compact on the sublevels of $\haz F + P_D$.

\medskip
\noindent{\bf Hausdorff distance.} There are several other geometric distances in the family of open sets, but they are hardly compatible with the $\g$-convergence. Nevertheless some partial observations can be done.

Let $d_{H^c}$ denote the Hausdorff complementary distance in the family of open subsets of $D$, given by
$$d_{H^c}(\Om_1,\Om_2)=\max_{x\in\ov D}|d(x,\ov D\sm\Om_1)-d(x,\ov D\sm\Om_2)|.$$
Assume $F$ is increasing with respect to the set inclusion. Then, there exists a solution of the iteration step
$$\min_{\Om\sq D}F(\Om)+\frac{d_{H^c}^2(\Om,\Om_0)}{2\vps},$$
which is of the form
$$\Om=D\sm(\Om_0^c+\ov B_h).$$
Let $\Omega\subset D$ be open. Then, $\Omega\cap\Omega_0$ is open and, as $F$ is increasing with respect to set inclusion and $d_{H^c}(\Omega\cap \Omega_0,\Omega_0)\le d_{H^c}(\Omega,\Omega_0)$, we have
$$F(\Omega\cap\Omega_0)+\frac{1}{2\eps}d_{H^c}^2(\Omega\cap\Omega_0,\Omega_0)\le F(\Omega ) + \frac{1}{2\eps}d_{H^c}^2(\Omega,\Omega_0).$$
Now, define $h=d_{H^c}^2(\Omega\cap\Omega_0,\Omega_0)$ and observe that $D\setminus(\Omega_0^c+\overline B_h)\sq\Omega\cap\Omega_0$ and $d_{H^c}(D\setminus(\Omega_0^c+\overline B_h),\Omega_0)=h$. Then,
\begin{align}
&F(D\setminus(\Omega_0^c+\overline B_h))+\frac{1}{2\eps}d_{H^c}^2(D\setminus (\Omega_0^c+\overline B_h),\Omega_0)\nonumber\\
&\le F(\Omega\cap\Omega_0)
+\frac{1}{2\eps}d_{H^c}^2(\Omega\cap\Omega_0,\Omega_0)
\le F(\Omega)+\frac{1}{2\eps}d_{H^c}^2(\Omega,\Omega_0)
\quad\forall\Omega\sq D\ \text{open}.\nonumber
\end{align}

Finally, there exists a generalized minimizing movement associated to $F$ and $d_{H^c}$ of the form $t\mapsto D\sm(\Om_0^c+\ov B_{f(t)})$, where $f$ is continuous and increasing. 

The same argument for decreasing functionals $F$ and the Hausdorff distance 
$$d_H(F_1,F_2)=\max_{x\in\ov D}|d(x,F_1)-d(x,F_2)|$$
can be repeated. The only point which is more delicate is concerned with the fact that the Hausdorff distance is not a ``proper'' metric in the family of open sets. Nevertheless, one can prove the existence of generalized minimizing movement associated to $F$ and $d_{H}$ of the form $t\mapsto\mbox{int}(\ov\Om_0+\ov B_{f(t)})$, where $F$ is continuous and increasing. This solution relies on the equivalence relation in the family of the open sets: $\Om_1\equiv\Om_2$ if $\ov\Om_1=\ov\Om_2$ and on the redefinition of the Sobolev space
$$\tilde H^1_0(\Om):=\big\{u\in H^1_0(D)\ :\ u=0\mbox{ a.e. on }D\sm\ov\Om\big\}.$$

\subsection{Flows of convex shapes}

In this section we deal with the evolution of convex open sets. We introduce the family
$$\KK(D)=\{K\sq D\ :\ K\mbox{ open and convex}\}.$$
There are different possible distances on $K$ which have the same convergent sequences
\begin{itemize}
\item the Hausdorff distance;
\item $d_2(K_1,K_2)=\|b_{K_1}-b_{K_2}\|_{L^2(D)}$, where $b_K$ is the oriented distance function, defined by $b_K(x)=-d(x,\partial K)$ for $x\in K$ and $b_K(x)=d(x,\partial K)$ for $x\in D\sm K$;
\item the $L^1$ distance of the characteristic functions $d_{char}(K_1,K_2)=\int_D|1_{K_1}-1_{K_2}|\,dx$.
\end{itemize}

A slightly different distance, defined on the equivalence classes of homotopic convex sets is the Fraenkel relative asymmetry, defined by
$$A(K_1,K_2):=\inf_{x_0\in\R^n}\Big\{\frac{|K_1\symd (x_0+\lambda K_2)|}{|K_1|}\Big\},\quad\mbox{where }\lambda:=\frac{|K_1|^{1/n}}{|K_2|^{1/n}}.$$

Assume that $F:\KK(D)\to\R$ is a $\g$-lower semicontinuous shape functional which satisfies $F(K_n)\to+\infty$ as soon as $K_n$ converges to a degenerate set. Since in the class of convex sets, the $w\gamma$-convergence coincides with the $\gamma$-convergence, it is useless to require $\wg$-lower semicontinuity. Notice that all previous topologies are compact on sublevels of $F$. By applying Theorem \ref{texmm} we have the following.

\begin{theo}[Generalized minimizing movements of convex shapes]
For $d=d_H,d_{char}$, or $d_2$, and for every initial convex set $K_0\in D(F)$, we have that $\GMM(F,\tau(d),K_0)$ is non-empty. 
\end{theo}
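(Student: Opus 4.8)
The plan is to apply Theorem \ref{texmm} (existence of generalized minimizing movements), which requires exactly two hypotheses: that $(\KK(D),d)$ be a complete metric space, and that the sublevels of $F$ be $\tau(d)$-compact. The bulk of the work is therefore to verify these abstract hypotheses for each of the three distances $d_H$, $d_{char}$, and $d_2$, after which the conclusion is immediate. First I would record that all three distances induce the \emph{same} convergent sequences on $\KK(D)$, as stated in the bullet list preceding the theorem; this is the classical equivalence, for convex bodies, of Hausdorff convergence with $L^1$-convergence of characteristic functions and with $L^2$-convergence of the oriented distance functions $b_K$. Consequently it suffices to check the compactness of sublevels for a single one of these topologies, say the Hausdorff topology, and the result transfers automatically to the other two.

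The heart of the argument is the compactness of the sublevels $\{K\in\KK(D):F(K)\le c\}$. I would argue as follows. Take a sequence $(K_n)$ with $F(K_n)\le c$. Since every $K_n\subset D$ and $D$ is bounded, the Blaschke selection theorem furnishes a subsequence converging in the Hausdorff distance to some compact convex limit $K$. The only danger is that $K$ degenerates to a lower-dimensional (empty-interior) convex set, which would leave $\KK(D)$. This is precisely where the coercivity hypothesis enters: $F(K_n)\to+\infty$ whenever $K_n$ converges to a degenerate set, so the uniform bound $F(K_n)\le c$ rules out degeneration, and the limit $K$ is a genuine open convex subset of $D$, i.e. $K\in\KK(D)$. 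Finally, since in the convex class the Hausdorff convergence coincides with $\g$-convergence (as noted just before the theorem), the $\g$-lower semicontinuity of $F$ gives $F(K)\le\liminf_n F(K_n)\le c$, so the sublevel set is closed, hence compact. Together with completeness of $\KK(D)$ under each $d$ (which likewise follows from Blaschke selection plus the non-degeneracy built into the coercivity), the hypotheses of Theorem \ref{texmm} hold, and $\GMM(F,\tau(d),K_0)$ is non-empty for every $K_0\in D(F)$.

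The main obstacle, and the step deserving the most care, is the control of degeneracy in the Hausdorff limit. Blaschke selection only guarantees a convergent subsequence within the space of \emph{compact convex sets} of $\ov D$, a space that includes segments, points, and the empty set; these are exactly the degenerate configurations that must be excluded so that the limit lies in the open convex family $\KK(D)$ and so that the three metrics remain mutually equivalent (the equivalence can break down at degenerate sets). The coercivity assumption $F(K_n)\to+\infty$ on degenerating sequences is tailored to defeat this, so the proof hinges on invoking it correctly: one must verify that any Hausdorff-convergent subsequence of a sublevel sequence cannot approach a degenerate set, whence the limit is non-degenerate and the argument closes. Once this point is settled, the remaining verifications are routine, and the theorem follows directly from Theorem \ref{texmm}.
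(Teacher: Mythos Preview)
Your proposal is correct and follows exactly the paper's approach: the paper simply notes that the three topologies coincide on $\KK(D)$ and are compact on the sublevels of $F$ (by the coercivity assumption ruling out degeneration), and then invokes Theorem \ref{texmm} without further argument. You have supplied the details---Blaschke selection, the role of coercivity in excluding degenerate limits, and the transfer between the three equivalent metrics---that the paper leaves implicit.
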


\section*{Acknowledgments}
The work of Dorin Bucur is part of the project ANR-09-BLAN-0037 {\it Geometric analysis of optimal shapes (GAOS)} financed by the French Agence Nationale de la Recherche (ANR). The work of Giuseppe Buttazzo and Ulisse Stefanelli is part of the project 2008K7Z249 {\it Trasporto ottimo di massa, disuguaglianze geometriche e funzionali e applicazioni} financed by the Italian Ministry of Research. Ulisse Stefanelli acknowledges partial support from the grants FP7-IDEAS-ERC-StG \#200497 (BioSMA), CNR-AV\v CR 2010-2012 (SmartMath), and the Alexander von Humboldt Foundation.

{}

\bigskip
{\small\noindent
Dorin Bucur:
Laboratoire de Math\'ematiques (LAMA),
Universit\'e de Savoie\\
Campus Scientifique,
73376 Le-Bourget-Du-Lac - FRANCE\\
{\tt dorin.bucur@univ-savoie.fr}\\
{\tt http://www.lama.univ-savoie.fr/$\sim$bucur/}

\bigskip\noindent
Giuseppe Buttazzo:
Dipartimento di Matematica,
Universit\`a di Pisa\\
Largo B. Pontecorvo 5,
56127 Pisa - ITALY\\
{\tt buttazzo@dm.unipi.it}\\
{\tt http://www.dm.unipi.it/pages/buttazzo/}

\bigskip\noindent
Ulisse Stefanelli:
Istituto di Matematica Applicata e Tecnologie Informatiche,
CNR\\
Via Ferrata 1,
I-27100 Pavia - ITALY\\
and Weierstrass Institute for Applied Analysis and Stochastics,\\
Mohrenstrasse 39, D-10117 Berlin - GERMANY\\
{\tt ulisse.stefanelli@imati.cnr.it}\\
{\tt http://www.imati.cnr.it/ulisse}}
\end{document}